\documentclass[12pt,reqno]{amsart}
\usepackage{amsmath,amsthm,amssymb,amsfonts,amscd}
\usepackage{mathrsfs}
\usepackage{bbding}
\usepackage{graphicx,latexsym}
\usepackage{hyperref}

\pagestyle{plain}
\marginparwidth    0pt
\oddsidemargin     0pt
\evensidemargin    0pt
\topmargin         0pt
\textheight        21cm
\textwidth         16cm

\newcommand{\bea}{\begin{eqnarray}}
\newcommand{\eea}{\end{eqnarray}}
\newcommand{\bna}{\begin{eqnarray*}}
\newcommand{\ena}{\end{eqnarray*}}

\numberwithin{equation}{section} 

\setcounter{footnote}{0}

\theoremstyle{plain}
\newtheorem{theorem}{Theorem}[section]
\newtheorem{lemma}{Lemma}[section]

\newtheorem{proposition}{Proposition}[section]

\theoremstyle{definition}

\newtheorem{remark}{Remark}


\begin{document}

\title{Sums of the triple divisor function over values of a ternary quadratic form}

\author{Qingfeng Sun and Deyu Zhang}

\date{\today}

\begin{abstract}
Let $\tau_3(n)$ be the triple divisor function which
is the number of solutions of the equation $d_1d_2d_3=n$ in natural numbers.
It is shown that
$$
\sum_{1\leq n_1,n_2,n_3\leq \sqrt{x}}\tau_3(n_1^2+n_2^2+n_3^2)=c_1x^{\frac{3}{2}}(\log x)^2+
c_2x^{\frac{3}{2}}\log x
+c_3x^{\frac{3}{2}}
+O_{\varepsilon}(x^{\frac{11}{8}+\varepsilon})
$$
for some constants $c_1$, $c_2$ and $c_3$.
\end{abstract}

\keywords{Triple divisor function, ternary quadratic form, twisted character sum}
\maketitle
\tableofcontents

\section{Introduction}

The divisor functions
\bna
\tau_{k}(n)=\sum_{d_1\cdots d_k=n \atop d_1,\ldots,d_k\in \mathbb{Z^+}}1,
\ena
are the basic arithmetic functions in number theory, with the generating
Dirichlet series $\zeta^k(s)$ which are the simplest $GL_k$ $L$-functions.
While the Riemann zeta function $\zeta(s)$ has always been the most important
and intensively studied $L$-function, the behavior of $\tau_k(n)$ is far
less than perfectly understood even for $k=2$. For example, Hooley \cite{Hooley}
proved that
\bea
\sum_{n\leq x}\tau(n^2+a)=c_1x\log x+c_2x+O\left(x^{\frac{8}{9}}(\log x)^3\right)
\eea
for any fixed $a\in \mathbb{Z}$ such that $-a$ is not a perfect square, where
$c_1$ and $c_2$ are constants depending only on $a$. Here
as usual $\tau(n):=\tau_2(n)$. However, so far there are no
asymptotic formulas for the sum $\sum_{n\leq x}\tau(f(n))$ for $f(x)$ of degree $\mathrm{deg}f\geq 3$.
For the average behavior of the divisor functions over values of quadratic forms,
Yu \cite{Yu} proved that, as $x\rightarrow \infty$,
\bea
\sum_{1\leq n_1,n_2\leq \sqrt{x}}\tau(n_1^2+n_2^2)=
c_3x\log x+c_4x+O_{\varepsilon}\left(x^{\frac{3}{4}+\varepsilon}\right),
\eea
where $c_3$ and $c_4$ are constants.
Calder\'{o}n and de Velasco \cite{CV} studied the average behavior of $\tau(n)$ over values
of ternary quadratic form and established the asymptotic formula
\bea
\sum_{1\leq n_1,n_2,n_3\leq \sqrt{x} }\tau(n_1^2+n_2^2+n_3^2)
=\frac{4\zeta(3)}{5\zeta(5)}x^{\frac{3}{2}}\log x+O(x^{\frac{3}{2}}).
\eea
Recently, Guo and Zhai improved (1.3) by showing that
\bea
\sum_{1\leq n_1,n_2,n_3\leq \sqrt{x} }\tau(n_1^2+n_2^2+n_3^2)
=\frac{4\zeta(3)}{5\zeta(5)}x^{\frac{3}{2}}\log x+c_5x^{\frac{3}{2}}+O_{\varepsilon}(x^{\frac{4}{3}}),
\eea
where $c_5$ is a constant. The error term in (1.4) was further improved by Zhao \cite{Z} to
$O(x\log x)$. Nothing of type (1.1), (1.2) or (1.3) is known for $\tau_k(n)$ with $k\geq 3$ and
in fact the situation becomes even more difficult for $k\geq 3$ if one considers the sum
\bna
\sum_{n\leq x}a_n\tau_k(n)
\ena
for various sparse arithmetic sequences $a_n$. There are few results in this direction.
For $\tau_3(n)$, Friedlander and Iwaniec \cite{FI} showed that, for $x\geq 3$,
\bna
\sum_{n_1^2+n_2^6\leq x \atop (n_1,n_2)=1}\tau_3(n_1^2+n_2^6)=cx^{\frac{2}{3}}(\log x)^2
+O\left(x^{\frac{2}{3}}(\log x)^{\frac{7}{4}}(\log \log x)^{\frac{1}{2}}\right),
\ena
where $c$ is a constant.

In this paper, we want to prove an asymptotic formula of type (1.4) for $\tau_3(n)$.
Our main result is the following theorem.

\begin{theorem}
For any $x\geq x_0$ ($x_0$ is a large constant) and any $\varepsilon>0$, we have
\bna
\sum_{1\leq n_1,n_2,n_3\leq \sqrt{x}}\tau_3(n_1^2+n_2^2+n_3^2)
&=&\frac{\mathcal {C}_0\mathcal {J}_0}{4}x^{\frac{3}{2}}(\log x)^2+
\frac{1}{2}\left(\mathcal {C}_1\mathcal {J}_0+
\mathcal {C}_0\mathcal {J}_1\right)x^{\frac{3}{2}}\log x
\\
&&+
\frac{1}{2}\left(\mathcal {C}_2\mathcal {J}_0+
\mathcal {C}_1\mathcal {J}_1+\frac{1}{2}\mathcal {C}_0\mathcal {J}_2\right)x^{\frac{3}{2}}
+O_{\varepsilon}\left(x^{\frac{11}{8}+\varepsilon}\right),
\ena
where for $\ell=0,1,2$,
\bna
\mathcal {J}_{\ell}=\int_{-\infty}^{\infty}\left(\int_0^3(\log u)^{\ell}e(-\beta u)\mathrm{d}u\right)
\left(\int_0^1e(\beta v^2)\mathrm{d}v\right)^3\mathrm{d}\beta
\ena
and
\bna
\mathcal {C}_{\ell}=\sum_{q=1}^{\infty}\frac{1}{q^5}\sum_{n|q}n\tau(n)P_{\ell}(n,q)
\sum_{a=1\atop (a,q)=1}^qG(a,0;q)^3S(-\overline{a},0;q).
\ena
Here $\overline{a}$ denotes the multiplicative inverse of $a\bmod q$, $S(a,b;c)$ is the classical Kloosterman sum, $G(a,b;q)$ is the Gauss sum
\bna
G(a,b;q)=\sum\limits_{d\bmod q}e\left(\frac{ad^2+bd}{q}\right),
\ena
and $P_j(n,q)$ ($j=1,2$) are given by
\bna
P_1(n,q)&=&\frac{5}{3}\log n-3\log q+3\gamma-\frac{1}{3\tau(n)}\sum_{d|n}\log d,\\
P_2(n,q)&=&\left(\log n\right)^2-5\log q \log n
+\frac{9}{2}(\log q)^2+3\gamma^2-3\gamma_1+7\gamma \log n-9\gamma \log q\nonumber\\
&&+\frac{1}{\tau(n)}\left(\left(\log n+\log q-5\gamma\right)\sum_{d|n}\log d-\frac{3}{2}
\sum_{d|n}(\log d)^2\right)
\ena
with $\gamma:=\lim\limits_{s\rightarrow 1}\left(\zeta(s)-\frac{1}{s-1}\right)$
being the Euler constant and $\gamma_1:=-\frac{\mathrm{d}}{\mathrm{d}s}\left.\left(\zeta(s)-\frac{1}{s-1}\right)\right|_{s=1}$
being the Stieltjes constant.
\end{theorem}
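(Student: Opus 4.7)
The plan is to apply the Hardy-Littlewood circle method to the representation count $N_T(m) = \#\{(n_1,n_2,n_3)\in[1,T]^3 : n_1^2+n_2^2+n_3^2 = m\}$ (with $T = \sqrt{x}$) and substitute into $S(x) = \sum_m \tau_3(m) N_T(m)$. Writing
\[
N_T(m) = \int_0^1 S_T(\alpha)^3 e(-\alpha m)\,\mathrm{d}\alpha,\qquad S_T(\alpha) = \sum_{1 \leq n \leq T} e(\alpha n^2),
\]
I would split into major arcs $\mathfrak{M}$ around fractions $a/q$ with $(a,q)=1$ and $q \leq Q$, and minor arcs $\mathfrak{m}$, where $Q$ is to be chosen to balance the two error contributions (I expect $Q \asymp x^{3/4}$). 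On a major arc $\alpha = a/q + \beta/x$ the standard quadratic Gauss sum approximation gives
\[
S_T(\alpha) = \frac{G(a,0;q)}{q}\,T\int_0^1 e(\beta v^2)\,\mathrm{d}v + (\text{smaller}),
\]
so that cubing produces $q^{-3}G(a,0;q)^3 T^3 (\int_0^1 e(\beta v^2)\mathrm{d}v)^3$, which is precisely the $v$-integral appearing in $\mathcal{J}_\ell$.

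For the divisor factor I would Mellin-analyze
\[
\sum_m \tau_3(m) e\!\left(-\tfrac{am}{q}\right) V(m/x) = \frac{1}{2\pi i}\int_{(\sigma)} D(s, a/q)\widetilde{V}(s)\,x^s\,\mathrm{d}s,
\]
where $D(s,a/q) = \sum_m \tau_3(m) e(am/q) m^{-s}$ inherits a triple pole at $s=1$ from $\zeta(s)^3$, with local factors at the primes $p \mid q$ depending on $a$. Shifting the contour past $s=1$ produces a residue that is a polynomial of degree $2$ in $\log(xu)$ with $u = m/x$; expanding $(\log(xu))^\ell$ separates the three cases $\ell = 0,1,2$, and the resulting sum over $a \bmod q$ of $G(a,0;q)^3$ twisted by the local factors of $D(s,a/q)$ should evaluate in closed form as $\sum_{n\mid q} n\tau(n) P_\ell(n,q)\cdot S(-\overline{a},0;q)$. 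The polynomials $P_\ell(n,q)$ come out of differentiating the local Dirichlet series at $s=1$, which is what introduces the $\gamma$ and $\gamma_1$ terms. Pairing this with the $u$-integral of the Weyl-sum factor yields the three main terms with the announced coefficients $\mathcal{C}_\ell \mathcal{J}_{\ell'}$.

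The main obstacle is the error analysis: the error in the Gauss sum approximation on major arcs, the horizontal contour integral after shifting past the triple pole, and the minor-arc contribution must each be bounded by $O(x^{11/8+\varepsilon})$. On major arcs, Weil's bound for $G(a,0;q)$ and $S(-\overline{a},0;q)$ combined with standard estimates on $\zeta(s)^3$ along the shifted line, summed over $q\leq Q$, should suffice. The minor arcs require a Weyl-type bound for $S_T(\alpha)$ together with an effective bound for $\sum_m \tau_3(m) e(-\alpha m)$, and the latter is the hardest step: a $GL(3)$ Voronoi transformation introduces hyper-Kloosterman sums whose Weil bound yields substantially less saving than in the $\tau_2$ case, and it is this obstruction that limits the saving to $x^{1/8}$ rather than the $x^{1/2}$ achieved by Zhao for $\tau$. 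Optimizing $Q$ against these competing errors then pins down the exponent $11/8$.
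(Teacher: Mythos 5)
Your skeleton---the circle method applied to $r_3^*(m)=\int_0^1 S_T(\alpha)^3e(-\alpha m)\,\mathrm{d}\alpha$, the quadratic Gauss-sum approximation of $S_T$ on each arc, and extraction of the main terms from the triple pole of $\sum_m\tau_3(m)e(am/q)m^{-s}$ at $s=1$---matches the paper's Sections 2, 4 and 7 (the paper packages the polar contribution via Li's explicit $GL_3$ Voronoi formula rather than a contour shift, but that is the same mechanism, and the polynomials $P_1,P_2$ arise exactly as you describe). The genuine gap is at the point you defer to ``the error analysis.'' The paper does not use a major/minor arc dichotomy at all: it takes a \emph{complete} Farey dissection of level $Q=[5\sqrt{x}]$ and applies the Voronoi expansion on every arc, so the whole error is carried by the dual (non-polar) Voronoi terms. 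After interchanging the $a$-sum with the $\beta$-integration (Heath-Brown's device), these produce the two-dimensional character sums $\mathscr{C}(b_1,b_2,b_3,n,m,v;q)=\sideset{}{^*}\sum_{a\bmod q}e(-\overline{a}v/q)\,G(a,b_1;q)G(a,b_2;q)G(a,b_3;q)\,S(-\overline{a},\pm m;q/n)$. The decisive input, absent from your plan, is square-root cancellation in this $a$-average (Proposition 5.1, proved in Section 9 by reducing to a nondegenerate two-variable exponential sum and invoking Fu's theorem): on the squarefree part of $q$ one gets $q^{5/2}$ instead of the $q^{3}$ that term-by-term Weil bounds yield. Without that extra factor $q^{1/2}$ the dual terms contribute roughly $Mx^{3/2}$ rather than $Mx^{5/4}$, and against the unavoidable smoothing loss $x^{3/2+\varepsilon}M^{-1}$ from replacing the sharp dyadic cutoff by $\phi$, no choice of $M$ gives any power saving. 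So ``Weil's bound \ldots should suffice'' is precisely the step that fails.

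Two smaller inaccuracies: your proposed $Q\asymp x^{3/4}$ with genuine minor arcs would force you to bound $\sum_m\tau_3(m)e(-\alpha m)$ pointwise off the major arcs, which is exactly the intractable problem the paper's arc-by-arc Voronoi treatment is designed to avoid; and your diagnosis of where $11/8$ comes from is off---it is not a hyper-Kloosterman obstruction on minor arcs but the balance of $Mx^{5/4+\varepsilon}$ (dual terms, after Fu's saving) against $x^{3/2+\varepsilon}M^{-1}$ (smoothing), optimized at $M=x^{1/8}$.
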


A similar asymptotic formula can be derived for the slightly modified sum
\bna
\sum_{1\leq n_1^2+n_2^2+n_3^2\leq x \atop (n_1,n_2,n_3)\in \mathbb{Z}^3}\tau_3(n_1^2+n_2^2+n_3^2)
\ena
which is in some sense simpler than the sum in Theorem 1.1, since
obviously we have
\bea
\sum_{1\leq n_1^2+n_2^2+n_3^2\leq x}\tau_3(n_1^2+n_2^2+n_3^2)
=\sum_{1\leq n\leq x}\tau_3(n)r_3(n),
\eea
where
\bna
r_3(n)=\#\left\{(n_1,n_2,n_3)\in \mathbb{Z}^3:n_1^2+n_2^2+n_3^2=n\right\}.
\ena

\begin{theorem}
For any $x\geq x_0$ ($x_0$ is the same as that in Theorem 1.1) and any $\varepsilon>0$, we have
\bna
\sum_{1\leq n_1^2+n_2^2+n_3^2\leq x \atop (n_1,n_2,n_3)\in \mathbb{Z}^3}\tau_3(n_1^2+n_2^2+n_3^2)
&=&2\mathcal {C}_0\mathcal {K}_0x^{\frac{3}{2}}(\log x)^2+
4\left(\mathcal {C}_1\mathcal {K}_0+
\mathcal {C}_0\mathcal {K}_1\right)x^{\frac{3}{2}}\log x
\\
&&+
4\left(\mathcal {C}_2\mathcal {K}_0+
\mathcal {C}_1\mathcal {K}_1+\frac{1}{2}\mathcal {C}_0\mathcal {K}_2\right)x^{\frac{3}{2}}
+O_{\varepsilon}\left(x^{\frac{11}{8}+\varepsilon}\right),
\ena
where $C_{\ell}$'s are as in Theorem 1.1 and
\bna
\mathcal {K}_{\ell}=\int_{-\infty}^{\infty}\left(\int_0^1(\log u)^{\ell}e(-\beta u)\mathrm{d}u\right)
\left(\int_0^1e(\beta v^2)\mathrm{d}v\right)^3\mathrm{d}\beta.
\ena
\end{theorem}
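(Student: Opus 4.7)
The plan is to reduce Theorem 1.2 to a ball-constrained analogue of Theorem 1.1, after which the entire delta-method $+$ Voronoi $+$ Poisson analysis carries over with only the archimedean weight changed. Using symmetry $n_i\to-n_i$ and separating off the lower-dimensional contribution of triples with at least one zero coordinate,
\bna
\sum_{\substack{(n_1,n_2,n_3)\in\mathbb{Z}^3\\ 1\leq n_1^2+n_2^2+n_3^2\leq x}}\tau_3(n_1^2+n_2^2+n_3^2)
= 8\sum_{\substack{n_1,n_2,n_3\geq 1\\ n_1^2+n_2^2+n_3^2\leq x}}\tau_3(n_1^2+n_2^2+n_3^2) + O(x^{1+\varepsilon}),
\ena
where the error absorbs triples with some $n_i=0$ (estimated by a $\tau_3$-analogue of (1.2)). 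This factor of $8$ is precisely what converts $\mathcal{C}_0\mathcal{J}_0/4$ in Theorem 1.1 to $2\mathcal{C}_0\mathcal{K}_0$ here, and analogously for the lower-order terms.

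For the positive, ball-constrained sum I would follow the proof of Theorem 1.1 step by step, changing only the archimedean weight from the box cut-off $\prod_i \mathbf{1}_{[1,\sqrt{x}]}(n_i)$ to the ball cut-off $\mathbf{1}(n_1^2+n_2^2+n_3^2\leq x)$. After a smooth cut-off $W(m/x)$, applying the Duke--Friedlander--Iwaniec $\delta$-symbol to detect $m=n_1^2+n_2^2+n_3^2$, GL(3) Voronoi to the $\tau_3$-sum in $m$, and Poisson summation in each $n_i$, the arithmetic exponential factors --- three Gauss sums $G(a,0;q)^3$ from the squared variables and a Kloosterman sum $S(-\overline{a},0;q)$ from the Voronoi --- are identical to those in Theorem 1.1, so the coefficients $\mathcal{C}_\ell$ are inherited verbatim. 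In the archimedean main term, writing $m=xu$ and $n_i=\sqrt{x}\,v_i$, the $v_i$-integrals still give $\int_0^1 e(\beta v^2)\,\mathrm{d}v$ (each $v_i\in[0,1]$ from $v_i^2\leq u\leq 1$), but the $u$-integral now runs over $[0,1]$ instead of $[0,3]$: the ball constraint forces $u=v_1^2+v_2^2+v_3^2\leq 1$, whereas the box constraint of Theorem 1.1 only forced each $v_i\leq 1$ and hence $u\leq 3$. This is precisely the replacement $\mathcal{J}_\ell\to\mathcal{K}_\ell$.

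The off-diagonal error analysis depends only on $L^\infty$ and Sobolev bounds on the archimedean weight, not on its product or radial structure, so the bound $O_{\varepsilon}(x^{11/8+\varepsilon})$ transfers verbatim from Theorem 1.1. The one genuinely new point is the smoothing of the sharp spherical boundary $n_1^2+n_2^2+n_3^2=x$: a shell of thickness $\Delta$ contributes at most $O(\Delta\sqrt{x}\,x^{\varepsilon})$ to $\sum_{n\leq x}\tau_3(n)r_3(n)$, so any $\Delta\leq x^{7/8}$ leaves enough slack for a majorant/minorant sandwich (balancing the smoothed error from the $\delta$-symbol, which is polynomial in $\Delta^{-1}$, against the shell contribution). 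This smoothing step is the main technical obstacle in the strict sense, but it is routine; the rest of the argument is a direct adaptation of Theorem 1.1 and yields the stated asymptotic.
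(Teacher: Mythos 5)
Your proposal is correct and is essentially the proof the paper has in mind: the paper omits the argument for Theorem 1.2 with the remark that it is ``similar'' to Theorem 1.1, and the two modifications you identify --- the factor $8$ from passing from $\mathbb{Z}^3$ to the positive octant (with the coordinate-zero triples absorbed into $O(x^{1+\varepsilon})$), and the change of the $u$-range from $[0,3]$ to $[0,1]$ because $n=n_1^2+n_2^2+n_3^2$ now runs only up to $x$ rather than $3x$ --- are exactly what converts $\frac14\mathcal{C}_0\mathcal{J}_0$ into $2\mathcal{C}_0\mathcal{K}_0$ and likewise for the lower-order coefficients. Two cosmetic points: the machinery behind Theorem 1.1 is the classical Farey-dissection circle method with Vaughan's asymptotic for the quadratic Weyl sum (Lemma 4.1), not the DFI $\delta$-symbol plus Poisson summation in each $n_i$; and the ``spherical boundary'' smoothing you single out as the new technical obstacle is already handled in (2.2), since the sharp cutoff lives in the single variable $n\leq x$ and is treated dyadically using $r_3(n)\ll n^{1/2+\varepsilon}$ with $M=x^{1/8}$, which is precisely your $\Delta\leq x^{7/8}$ shell bound.
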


\begin{remark}
In Theorems 1.1 and 1.2,
$x_0$ is an absolute constant which can be explicitly computed. We can take $x_0=4^8$.
\end{remark}

\begin{remark}
The proofs of Theorems 1.1 and 1.2 are partly similar as that in Sun \cite{Sun} and Zhao \cite{Z}.
The main saving comes from square-root cancelation of a two dimensional twisted
character sum (see Section 9) which benefits from the beautiful theorem of Fu \cite{Fu}.
\end{remark}

\begin{remark}
The proof of Theorem 1.2 is similar as that of Theorem 1.1 and we shall omit it for simplicity.
In view of (1.5), the error term in Theorem 1.2 may be further improved by appealing to the analytic
properties of the $L$-function
\bna
\sum_{n\geq 1}\frac{\tau_3(n)r_3(n)}{n^s}.
\ena
However, we will not take up this issue in this paper.
\end{remark}

\medskip

\noindent{\bf Notation.}
Throughout the paper, the letters $q$, $m$ and $n$, with or without subscript,
denote integers. The letter $\varepsilon$ is an arbitrarily small
positive constant, not necessarily the same at different occurrences. The symbol
$\ll_{a,b,c}$ denotes that the implied constant depends at most on $a$, $b$ and $c$.

\medskip

\section{Derivation of Theorem 1.1}
\setcounter{equation}{0}
\medskip

Let $\mathcal{V}$ denote the set $[1,\sqrt{x}]\cap \mathbb{Z}$ and $r_3^*(n)=\sum\limits_{n_1^2+n_2^2+n_3^2=n\atop
(n_1,n_2,n_3)\in \mathcal{V}^3}1$. By dyadic subdivision, we decompose the aimed sum
into partial sums
\bea
\sum_{1\leq n_1,n_2,n_3\leq \sqrt{x}}\tau_3(n_1^2+n_2^2+n_3^2)=
\sum_{j\geq 1}\sum_{3x/2^j<n\leq 3x/2^{j-1}}\tau_3(n)r_3^*(n).
\eea
Notice that the inner sum in (2.1) vanishes for $j>\log 6x/\log 2$. So we are treating
$O(\log x)$ sums of the form
\bna
\sum_{X_j/2<n\leq X_j}\tau_3(n)r_3^*(n),
\ena
where $X_j=3x/2^{j-1}$.
Let $\phi(y)$ be a smooth function supported on $[1/2,1]$,
identically equal 1 on $[1/2+M^{-1},1-M^{-1}]$
with $M>4$, and satisfy $\phi^{(j)}(y)\ll_j M^j$ for any integer $j\geq 0$. Then we have
\bea
\sum_{X_j/2<n\leq X_j}\tau_3(n)r_3^*(n)=
\sum_{n\geq 1}\tau_3(n)r_3^*(n)\phi\left(\frac{n}{X_j}\right)+O_{\varepsilon}(X_j^{\frac{3}{2}+\varepsilon}M^{-1}).
\eea
Here the $O$-term comes from the bounds $\tau_3(n)\ll_{\varepsilon}n^{\varepsilon}$ and
\bna
r_3^*(n)\leq r_3(n)=\sum_{n_1^2+n_2^2+n_3^2=n\atop (n_1,n_2,n_3)\in \mathbb{Z}^3}1
\ll_{\varepsilon} n^{\frac{1}{2}+\varepsilon}.
\ena
Thus it remains to study the smoothed sum
\bna
\mathscr{S}(X)=\sum_{n\geq 1}\tau_3(n)r_3^*(n)\phi\left(\frac{n}{X}\right).
\ena

We are going to prove the following asymptotic formula for $\mathscr{S}(X)$.

\begin{theorem}
For any $1<X\leq 3x$ and any $\varepsilon>0$, we have
\bna
\mathscr{S}(X)
&=&
\frac{1}{2}\mathcal{I}_0(X)\mathcal {C}_2x^{\frac{3}{2}}+\frac{1}{2}\mathcal{I}_1(X)\mathcal {C}_1x^{\frac{3}{2}}
+\frac{1}{4}\mathcal{I}_2(X)\mathcal {C}_0x^{\frac{3}{2}}+O_{\varepsilon}\left(x^{\frac{5}{4}+\varepsilon}M
+x^{\frac{3}{2}+\varepsilon}M^{-1}\right),
\ena
where
\bea
\mathcal {C}_{\ell}=\sum_{q=1}^{\infty}\frac{1}{q^5}\sum_{n|q}n\tau(n)P_{\ell}(n,q)
\sideset{}{^*}\sum_{a=1}^q
G(a,0;q)^3 S\left(-\overline{a},0;\frac{q}{n}\right),
\eea
and
\bna
\mathcal{I}_{\ell}(X)=\int_{-\infty}^{\infty}
\left(\int_{X/2}^Xe(-\beta u)(\log u)^{\ell}\mathrm{d}u\right)
\left(\int_0^{1}e(\beta x  v^2)\mathrm{d}v\right)^3\mathrm{d}\beta.
\ena
\end{theorem}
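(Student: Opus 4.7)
My approach combines a Farey-arc dissection (or the Duke--Friedlander--Iwaniec delta method) with Poisson summation applied to the theta sum $\bigl(\sum_{n\in\mathcal V}e(\beta n^2)\bigr)^3$ and Voronoi summation applied to the triple divisor function. I would begin by detecting $m=n_1^2+n_2^2+n_3^2$ via
\bna
\mathscr S(X)=\int_0^1\Bigl(\sum_{n\in\mathcal V}e(\beta n^2)\Bigr)^3\sum_m\tau_3(m)\phi(m/X)e(-\beta m)\,\mathrm{d}\beta,
\ena
and dissect the circle into Farey arcs centred at $a/q$ with $(a,q)=1$, writing $\beta=a/q+\eta$ on each arc.

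On each arc, the ternary theta sum splits into residue classes: for every $v\bmod q$, Poisson on the long variable $u$ with $n=qu+v$ yields a zero-frequency contribution $q^{-1}\int\phi_1(y)e(\eta y^2)\,\mathrm{d}y$, which after the rescaling $y=\sqrt{x}\,w$ becomes $q^{-1}\sqrt{x}\int_0^1 e(\eta x w^2)\,\mathrm{d}w$. Summing $e(av^2/q)$ over $v\bmod q$ produces $G(a,0;q)$, and cubing contributes the factor $q^{-3}G(a,0;q)^3\bigl(\int_0^1 e(\eta x w^2)\,\mathrm{d}w\bigr)^3 x^{3/2}$ visible in $\mathcal I_\ell(X)\mathcal C_\ell x^{3/2}$. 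The nonzero Poisson frequencies are postponed to the error analysis.

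For the $\tau_3$-side I would use the splitting $\tau_3=1\ast\tau$---so that classical rather than hyper-Kloosterman sums appear---and apply the Voronoi formula to the inner $\tau$-sum with additive phase $a/q$. The residue of $\zeta^3(s)X^s$ at its triple pole $s=1$ generates a polynomial of degree two in $\log X$ whose coefficients involve $\gamma$, $\gamma_1$ and divisor-logarithm sums; these are precisely the weights $P_\ell(n,q)$ appearing in $\mathcal C_\ell$. The sum $\sum_{n\mid q}$ originates in the coprimality factorisation of the Voronoi modulus against the Farey denominator, and the dual side supplies exactly the Kloosterman sums $S(-\overline{a},0;q/n)$ featured in (2.3).

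Collecting the zero-frequency contributions on both sides assembles the claimed main term $\tfrac12\mathcal I_0(X)\mathcal C_2 x^{3/2}+\tfrac12\mathcal I_1(X)\mathcal C_1 x^{3/2}+\tfrac14\mathcal I_2(X)\mathcal C_0 x^{3/2}$; the $O(x^{3/2+\varepsilon}M^{-1})$ error absorbs the boundary fluctuations of $\phi$ together with the small portion of the $\eta$-range where the continuous approximation to the theta sum breaks down. The main obstacle is the joint off-diagonal contribution: nonzero Poisson frequencies on the theta side, paired with the Voronoi dual on the $\tau$-side, assemble into a two-dimensional twisted character sum of the shape
\bna
\sideset{}{^*}\sum_{a\bmod q}\;\sum_{v_1,v_2\bmod q}e\!\left(\frac{a(v_1^2+v_2^2+\cdots)+\overline{a}\,k}{q}\right),
\ena
on which a direct Weil estimate would save only $q^{1/2}$ in a single variable---insufficient for the target error. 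The crucial input is Fu's theorem \cite{Fu} on such two-dimensional twisted sums, which supplies full square-root cancellation in both variables and drives the off-diagonal contribution down to $O(x^{5/4+\varepsilon}M)$. This is the step I expect to require the greatest care, and it is the step where the present paper improves on the previous treatments of Sun \cite{Sun} and Zhao \cite{Z}.
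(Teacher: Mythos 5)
Your overall architecture matches the paper's: Farey dissection of the circle, Poisson/Gauss-sum expansion of the cubed theta sum (the paper's Lemma 4.1), a Voronoi-type expansion of the $\tau_3$-generating sum, zero frequencies on both sides producing the main terms, and Fu's theorem supplying square-root cancellation in a two-variable character sum for the off-diagonal. However, there is one genuine gap in how you treat the arithmetic side. You propose to split $\tau_3=1\ast\tau$ and apply the classical ($GL_2$) Voronoi formula to the inner $\tau$-sum. The paper instead uses Li's explicit $GL_3$ Voronoi formula for $\tau_3$ itself (Lemma 3.1): the polar term coming from the triple pole there is precisely what produces the degree-two polynomials $P_1(n,q)$, $P_2(n,q)$, the divisor sum $\sum_{n\mid q}n\tau(n)$, and the Kloosterman sums $S(\pm m,\overline a;q/n)$ that appear verbatim in $\mathcal C_\ell$. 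Your plan quietly assumes the same output --- you invoke ``the residue of $\zeta^3(s)X^s$ at its triple pole'' and assert that it yields ``precisely the weights $P_\ell(n,q)$'' --- but that triple pole belongs to the $GL_3$ Voronoi world, not to the $1\ast\tau$ splitting. Under $1\ast\tau$ you would write $\sum_m\tau_3(m)e(am/q)\phi(m/X)=\sum_d\sum_e\tau(e)e(ade/q)\phi(de/X)$ with the free variable $d$ ranging up to $X$; for $d$ large the inner sum has length $X/d\ll q^2$ and the $GL_2$ Voronoi formula gives no saving, the modulus degenerates to $q/(q,d)$, and the main term organizes itself around Ramanujan sums and a $d$-sum rather than around $\sum_{n\mid q}n\tau(n)P_\ell(n,q)S(-\overline a,0;q/n)$. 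Reconciling that with the stated $\mathcal C_\ell$, and controlling the unbalanced ranges, is exactly the work your sketch omits; as written, the identification of the main term is not justified.

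A secondary, smaller inaccuracy: the two-variable sum to which Fu's theorem is applied is not the one you display. In the paper the Gauss sums $G(a,b_j;q)$ are first evaluated in closed form on the odd square-free part of $q$ (so the $v_j$-variables disappear into a quadratic character and a phase $e(-\overline4\,\overline a(b_1^2+b_2^2+b_3^2)/q)$), the Kloosterman sum $S(-\overline a,m;q/n)$ is opened, and Fu's non-degeneracy criterion is applied prime by prime to the resulting sum $\sum_{y,z}\left(\frac zp\right)e\bigl((r_0\overline z-r_1\overline zy+r_2m\overline y)/p\bigr)$, after factoring $q$ into a part dividing $n^\infty$, a square-full part (handled by Weil alone) and a square-free part. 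Your displayed sum retains the unevaluated quadratic variables and drops the Kloosterman variable, so the non-degeneracy verification --- the one step you yourself flag as requiring the greatest care --- is not actually set up.
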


We set the proof of Theorem 2.1 aside and continue the derivation of Theorem 1.1.
Applying Theorem 2.1 to the sum on the right side of (2.2) and taking $M=x^{\frac{1}{8}}$ we get
\bea
\sum_{X_j/2<n\leq X_j}\tau_3(n)r_3^*(n)=
\frac{1}{2}\mathcal{I}_0(X_j)\mathcal {C}_2x^{\frac{3}{2}}+\frac{1}{2}\mathcal{I}_1(X_j)\mathcal {C}_1x^{\frac{3}{2}}
+\frac{1}{4}\mathcal{I}_2(X_j)\mathcal {C}_0x^{\frac{3}{2}}
+O_{\varepsilon}\left(x^{\frac{11}{8}+\varepsilon}
\right).
\eea
Set $j_0:=j_0(x)=[\log x/\log 2]$. Plugging (2.4) into (2.1) we obtain
\bea
&&\sum_{1\leq n_1,n_2,n_3\leq \sqrt{x}}\tau_3(n_1^2+n_2^2+n_3^2)\nonumber\\
&=&
\sum_{1\leq j\leq j_0}\sum_{3x/2^j<n\leq 3x/2^{j-1}}\tau_3(n)r_3^*(n)+O(1)\nonumber\\
&=&\frac{1}{2}\mathcal {C}_2x^{\frac{3}{2}}\sum_{1\leq j\leq j_0}\mathcal{I}_0\left(\frac{3x}{2^{j-1}}\right)
+\frac{1}{2}\mathcal {C}_1x^{\frac{3}{2}}\sum_{1\leq j\leq j_0}\mathcal{I}_1\left(\frac{3x}{2^{j-1}}\right)
+\frac{1}{4}\mathcal {C}_0x^{\frac{3}{2}}\sum_{1\leq j\leq j_0}\mathcal{I}_2\left(\frac{3x}{2^{j-1}}\right)
\nonumber\\
&&+O_{\varepsilon}\left(x^{\frac{11}{8}+\varepsilon}\right),\nonumber\\
\eea
where for $\ell=0,1,2$,
\bea
\sum_{1\leq j\leq j_0}\mathcal{I}_{\ell}\left(\frac{3x}{2^{j-1}}\right)&=&
\int_{-\infty}^{\infty}\left(\sum_{1\leq j\leq j_0}\int_{\frac{3x}{2^{j}}}^{\frac{3x}{2^{j-1}}}
(\log u)^{\ell}e(-\beta u)\mathrm{d}u\right)
\left(\int_0^1e(\beta x v^2)\mathrm{d}v\right)^3\mathrm{d}\beta\nonumber\\
&=&x\int_{-\infty}^{\infty}\left(\int_{\frac{3}{2^{j_0}}}^3(\log u x)^{\ell}
e(-\beta x u)\mathrm{d}u\right)
\left(\int_0^1e(\beta x v^2)\mathrm{d}v\right)^3\mathrm{d}\beta\nonumber\\
&=&\sum_{i=0}^{\ell}C_{\ell}^i(\log x)^{\ell-i}\int_{-\infty}^{\infty}\left(\int_0^3
(\log u)^ie(-\beta u)\mathrm{d}u\right)
\left(\int_0^1e(\beta v^2)\mathrm{d}v\right)^3\mathrm{d}\beta\nonumber\\
&&+\mathcal{R}_{\ell}(x),
\eea
and for $x\geq 6$,
\bea
\mathcal{R}_{\ell}(x)&=&\sum_{i=0}^{\ell}
C_{\ell}^i(\log x)^{\ell-i}\int_{-\infty}^{\infty}\left(\int_0^{\frac{3}{2^{j_0}}}
(\log u)^ie(-\beta u)\mathrm{d}u\right)
\left(\int_0^1e(\beta v^2)\mathrm{d}\beta\right)^3\mathrm{d}\beta\nonumber\\
&\ll_{\ell}&(\log x)^{\ell}\int_0^{\frac{3}{2^{j_0}}}
(-\log u)^{\ell}\mathrm{d}u\nonumber\\
&\ll_{\ell}&x^{-1}(\log x)^4.
\eea
By (2.5)-(2.7) we obtain
\bna
\sum_{1\leq n_1,n_2,n_3\leq \sqrt{x}}\tau_3(n_1^2+n_2^2+n_3^2)
&=&\frac{\mathcal {C}_0\mathcal {J}_0}{4}x^{\frac{3}{2}}(\log x)^2+
\frac{1}{2}\left(\mathcal {C}_1\mathcal {J}_0+
\mathcal {C}_0\mathcal {J}_1\right)x^{\frac{3}{2}}\log x\nonumber\\
&&+
\frac{1}{2}\left(\mathcal {C}_2\mathcal {J}_0+
\mathcal {C}_1\mathcal {J}_1+\frac{1}{2}\mathcal {C}_0\mathcal {J}_2\right)x^{\frac{3}{2}}
+O_{\varepsilon}\left(x^{\frac{11}{8}+\varepsilon}\right),
\ena
where for $\mathcal {C}_{\ell}$ is defined in (2.3) and
\bna
\mathcal {J}_{\ell}=\int_{-\infty}^{\infty}\left(\int_0^3(\log u)^{\ell}e(-\beta u)\mathrm{d}u\right)
\left(\int_0^1e(\beta v^2)\mathrm{d}v\right)^3\mathrm{d}\beta.
\ena
This finishes the proof of Theorem 1.1. The following sections are devoted to the proof of
Theorem 2.1.

\medskip

\section{Voronoi formula for the triple divisor function}
\setcounter{equation}{0}
\medskip

The Voronoi formula for $\tau_3(n)$ was first proved by Ivi\'{c} \cite{Iv} and
later in \cite{Li3}, Li derived a more explicit formula. To adopt Li's result, set
\bea
\sigma_{0,0}(k,l)=\sum_{d_1|l \atop d_1>0}\mathop{\sum_{d_2|\frac{l}{d_1} \atop d_2>0}}_{(d_2,k)=1}1.
\eea
Let $\zeta(s)$ be the Riemann zeta function, $\gamma:=\lim\limits_{s\rightarrow 1}\left(\zeta(s)-\frac{1}{s-1}\right)$
be the Euler constant and $\gamma_1:=-\frac{\mathrm{d}}{\mathrm{d}s}\left.\left(\zeta(s)-\frac{1}{s-1}\right)\right|_{s=1}$
be the Stieltjes constant. For $\phi(y)\in C_c(0,\infty)$, $k=0,1$ and $\sigma>-1-2k$, set
\bna
\Phi_k(y)=\frac{1}{2\pi i}\int\limits_{\mathrm{Re}(s)=\sigma}\left(\pi^3y\right)^{-s}
\frac{\Gamma\left(\frac{1+s+2k}{2}\right)^3}{\Gamma\left(\frac{-s}{2}\right)^3}
\widetilde{\phi}(-s-k)\mathrm{d}s
\ena
with $\widetilde{\phi}(s)=\int_0^{\infty}\phi(u)u^{s-1}\mathrm{d}u$ the Mellin transform of $\phi$, and
\bna
\Phi^{\pm}(y)=\Phi_0(y)\pm \frac{1}{i\pi^3y}\Phi_1(y).
\ena

\medskip

\begin{lemma} For $\phi(y)\in C_c^{\infty}(0,\infty)$, $a, \overline{a}, q \in \mathbb{Z}^+$ with
$a\overline{a}\equiv 1(\bmod q)$, we have
\bna
&&\sum_{n\geq 1}\tau_3(n)e\left(\frac{an}{q}\right)\phi(n)\\
&=&
\frac{q}{2\pi^{\frac{3}{2}}}\sum_{\pm}\sum_{n|q}\sum_{m\geq 1}\frac{1}{nm}
\sum_{n_1|n}\sum_{n_2|\frac{n}{n_1}}\sigma_{0,0}\left(\frac{n}{n_1n_2},m\right)
S\left(\pm m,\overline{a};\frac{q}{n}\right)\Phi^{\pm}\left(\frac{mn^2}{q^3}\right)\\
&&+\frac{1}{2q^2}\widetilde{\phi}(1)\sum_{n|q}n\tau(n)P_2(n,q)S\left(0,\overline{a};\frac{q}{n}\right)
\\
&&+\frac{1}{2q^2}\widetilde{\phi}'(1)\sum_{n|q}n\tau(n)P_1(n,q)S\left(0,\overline{a};\frac{q}{n}\right)
\\
&&+\frac{1}{4q^2}\widetilde{\phi}''(1)\sum_{n|q}n\tau(n)S\left(0,\overline{a};\frac{q}{n}\right),
\ena
where $S(a,b;c)$ is the classical Kloosterman sum,
\bea
P_1(n,q)=\frac{5}{3}\log n-3\log q+3\gamma-\frac{1}{3\tau(n)}\sum_{d|n}\log d,
\eea
and
\bea
P_2(n,q)&=&\left(\log n\right)^2-5\log q \log n
+\frac{9}{2}(\log q)^2+3\gamma^2-3\gamma_1+7\gamma \log n-9\gamma \log q\nonumber\\
&&+\frac{1}{\tau(n)}\left(\left(\log n+\log q-5\gamma\right)\sum_{d|n}\log d-\frac{3}{2}
\sum_{d|n}(\log d)^2\right).
\eea
\end{lemma}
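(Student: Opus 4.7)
The plan is to derive this statement as an explicit version of the Voronoi formula of Li \cite{Li3} for $\tau_3$, in which the main terms arising from the triple pole at $s=1$ of the underlying Dirichlet series are computed explicitly. Throughout, write
\[
D(s,a/q)=\sum_{n\geq 1}\tau_3(n)e(an/q)n^{-s},\qquad \mathrm{Re}(s)>1,
\]
so that by Mellin inversion, for $\sigma>1$,
\[
\sum_{n\geq 1}\tau_3(n)e(an/q)\phi(n)=\frac{1}{2\pi i}\int_{(\sigma)}\widetilde{\phi}(s)\,D(s,a/q)\,\mathrm{d}s.
\]
The standard strategy is to shift the contour far to the left, collect the polar contribution at $s=1$, and on the shifted line apply the functional equation for $D(s,a/q)$ in order to recover the dual sum.

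First, I would recall that $D(s,a/q)$ admits an Estermann-type decomposition: writing any $n$ in terms of its common part with $q$ and the coprime complement, one factors $D(s,a/q)$ as a finite sum, indexed by divisors $n\mid q$, of $\zeta(s)^3$ times Dirichlet polynomials in $n,q$ multiplied by Kloosterman-like averages $S(0,\overline{a};q/n)$. The functional equation of Li (essentially the $GL_3$ functional equation applied to the triple Eisenstein datum) converts $D(s,a/q)$ on $\mathrm{Re}(s)=-N$ to the dual side, producing the gamma-ratio kernel $\Gamma((1+s+2k)/2)^3/\Gamma(-s/2)^3$ and the Kloosterman sums $S(\pm m,\overline{a};q/n)$, together with the divisor factor $\sigma_{0,0}(n/(n_1n_2),m)$ coming from the $\tau_3$ structure of the dual. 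After inverting Mellin on the shifted contour one obtains the integral transforms $\Phi_0$ and $\Phi_1$, and combining the $\pm$ parities gives exactly the leading terms of the lemma with the kernels $\Phi^{\pm}$.

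The main effort is the computation of the residue at $s=1$. Near $s=1$,
\[
\zeta(s)^3=\frac{1}{(s-1)^3}+\frac{3\gamma}{(s-1)^2}+\frac{3\gamma^2-3\gamma_1}{s-1}+O(1),
\]
and the Dirichlet polynomials produced by the factorization of $D(s,a/q)$ contribute Laurent data of the form
\[
\frac{n}{q^2\,\tau(n)}\Bigl(\tau(n)-\Bigl(\sum_{d\mid n}\log d\Bigr)(s-1)+\tfrac{1}{2}\Bigl(\sum_{d\mid n}(\log d)^2\Bigr)(s-1)^2+\cdots\Bigr)
\]
(together with a factor $q^{-s}$ producing $-\log q$ terms). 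Multiplying these expansions and matching coefficients of $(s-1)^{-1},(s-1)^{-2},(s-1)^{-3}$ against $\widetilde{\phi}(s)=\widetilde{\phi}(1)+\widetilde{\phi}'(1)(s-1)+\tfrac{1}{2}\widetilde{\phi}''(1)(s-1)^2+\cdots$ and taking residues yields the three main terms, with the polynomial coefficients $P_2(n,q)$ for the $\widetilde{\phi}(1)$ term, $P_1(n,q)$ for the $\widetilde{\phi}'(1)$ term, and the constant $1$ for the $\widetilde{\phi}''(1)$ term. The factor $\tfrac{1}{2q^2}$ in the first two main terms and $\tfrac{1}{4q^2}$ in the last term is a direct bookkeeping consequence of the $\tfrac{1}{2!}$ from the Taylor expansion of $\widetilde{\phi}$.

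The main obstacle is this residue bookkeeping: one must expand four distinct Laurent series (namely $\zeta(s)^3$, $q^{-s}$, the Dirichlet polynomial in the divisors of $n$, and $\widetilde{\phi}(s)$) through the $(s-1)^2$ level and carefully combine them so that the cross terms assemble into exactly the polynomials $P_1(n,q)$ and $P_2(n,q)$ displayed in (3.2)–(3.3); in particular the appearance of $\gamma_1$ in $P_2$ comes only from the $(s-1)^{-1}$-part of $\zeta(s)^3$, and the $\sum_{d\mid n}(\log d)^2$ term in $P_2$ arises only from the second-order contribution of the divisor polynomial. Once this expansion is done correctly, the two sides match and the lemma is proved; no deep tool beyond Li's Voronoi formula and the Laurent expansion of $\zeta(s)^3$ is needed.
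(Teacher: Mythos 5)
The paper offers no proof of Lemma 3.1: it is imported verbatim from Li's explicit Voronoi formula for $\tau_3$ \cite{Li3}, so there is no in-paper argument to compare yours against. Your plan is indeed the derivation underlying that reference --- Mellin inversion, an Estermann-type decomposition of $\sum\tau_3(n)e(an/q)n^{-s}$ over divisors of $q$, the functional equation on the shifted line producing $\Phi^{\pm}$ and the Kloosterman sums, and the residue at the triple pole. Your structural bookkeeping is also right: the residue of $\widetilde{\phi}(s)D(s)$ at a triple pole pairs $\tfrac12\widetilde{\phi}''(1)$ with the leading Laurent coefficient (whence the $\tfrac{1}{4q^2}$ versus $\tfrac{1}{2q^2}$), and the constants $3\gamma$ and $3\gamma^2-3\gamma_1$ in $P_1,P_2$ do come from $\zeta(s)^3=(s-1)^{-3}+3\gamma(s-1)^{-2}+(3\gamma^2-3\gamma_1)(s-1)^{-1}+\cdots$.

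The gap is that the explicit polynomials $P_1(n,q)$ and $P_2(n,q)$ are the entire content of the lemma beyond the bare Voronoi formula, and your proposal both defers and mis-specifies that computation. The Laurent data you posit amounts to writing the polar part attached to $n\mid q$ as $\zeta(s)^3B_n(s)$ for a single analytic factor $B_n$; but then $P_1=3\gamma+B_n'(1)$ and $P_2=3\gamma^2-3\gamma_1+3\gamma B_n'(1)+\tfrac12 B_n''(1)$, which forces the coefficient of $\gamma\log n$ in $P_2$ to be three times the coefficient of $\log n$ in $P_1$, i.e.\ $3\cdot\tfrac53=5$, whereas the stated $P_2$ has $7\gamma\log n$; likewise your structure would give $-\gamma\cdot\tfrac{1}{\tau(n)}\sum_{d\mid n}\log d$ in $P_2$ rather than the stated $-5\gamma\cdot\tfrac{1}{\tau(n)}\sum_{d\mid n}\log d$, and your posited divisor polynomial gives coefficient $-1$ rather than $-\tfrac13$ on $\tfrac{1}{\tau(n)}\sum_{d\mid n}\log d$ already in $P_1$. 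The true polar structure reflects $\tau_3=1*1*1$: one gets a sum over divisor decompositions of products of three separately twisted zeta factors, each with its own simple pole and its own $\gamma$, and it is the cross terms among these three factors (averaged over the decompositions, which is where the $\tfrac{1}{\tau(n)}$ weights arise) that produce the coefficients $\tfrac53$, $7$, $-\tfrac13$, $-5$. Until that triple-product expansion is actually carried out --- or the formula is simply quoted from \cite{Li3}, as the paper does --- the stated $P_1$ and $P_2$ are not verified.
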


The functions $\Phi^{\pm}(y)$ have the following properties (see Sun \cite{Sun}).

\begin{lemma}
Suppose that $\phi(y)$ is a smooth function of
compact support in $[AX,BX]$,
where $X>0$ and $B>A>0$,
satisfying $\phi^{(j)}(y)\ll_{A,B,j}P^j $ for any integer $j\geq 0$.
Then for $y>0$ and any integer $\ell\geq 0$, we have
$$
\Phi^{\pm}(y)\ll_{A,B,\ell,\varepsilon}(yX)^{-\varepsilon}(PX)^3\left(\frac{y}{P^3X^2}\right)^{-\ell}.
$$
\end{lemma}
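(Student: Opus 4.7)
The plan is to establish the bound from the Mellin--Barnes representation of $\Phi_k(y)$ by combining Stirling's asymptotic for the gamma ratio on vertical contours with repeated integration by parts in the Mellin transform $\widetilde{\phi}$, and then shifting the contour to the right to extract the decay factor $(y/P^3X^2)^{-\ell}$.

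First I would bound $\widetilde{\phi}(-s-k)$. Because $\phi$ is smooth with compact support in $[AX,BX]$ and $\phi^{(j)}(u)\ll_j P^j$, integration by parts $j$ times in the defining Mellin integral gives the uniform bound
$$\widetilde{\phi}(-s-k) \ll_j X^{-\Re s - k}\left(\frac{PX}{1+|s|}\right)^j,$$
so the integrand in $\Phi_k$ becomes negligibly small once $|\Im s| \gg PX$. In parallel, Stirling's formula---applied most cleanly via the reflection formula $\Gamma(-s/2)^{-1} = -\pi^{-1}\sin(\pi s/2)\,\Gamma(1+s/2)$, so that the exponential factors cancel---yields
$$\left|\frac{\Gamma((1+s+2k)/2)^3}{\Gamma(-s/2)^3}\right| \ll (1+|\Im s|)^{3\Re s + 3k + 3/2}$$
uniformly on vertical strips.

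Next I would shift the contour to the right: for $\Phi_0$ to $\Re s = \ell + \varepsilon$, and for $\Phi_1$ (which enters $\Phi^{\pm}$ with an extra factor $1/(i\pi^3y)$) to $\Re s = \ell - 1 + \varepsilon$. Neither shift crosses a pole, since the poles of the gamma ratio lie at $s = -1-2k-2n$, $n \geq 0$, all with $\Re s \leq -1$, while the new contours lie strictly to the right of $-1$. Splitting the resulting $|\Im s|$-integral at $|\Im s| = PX$ (trivial bound below, the integration-by-parts bound with $j$ large enough in the tail to force absolute convergence) then yields
$$|\Phi_0(y)| \ll_\varepsilon y^{-\ell-\varepsilon} X^{-\ell-\varepsilon}(PX)^{3\ell+5/2+O(\varepsilon)}, \quad |\Phi_1(y)| \ll_\varepsilon y^{-\ell+1-\varepsilon} X^{-\ell-\varepsilon}(PX)^{3\ell+1/2+O(\varepsilon)}.$$

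Finally I assemble $\Phi^{\pm}(y) = \Phi_0(y) \pm (i\pi^3 y)^{-1}\Phi_1(y)$. The two pieces both take the form
$$y^{-\ell-\varepsilon}\,P^{3\ell}X^{2\ell}\cdot (PX)^{5/2+O(\varepsilon)}$$
(with the $k=1$ part even smaller by $(PX)^{-2}$); since $P,X\geq 1$ we may majorize $(PX)^{5/2+O(\varepsilon)}$ by $(PX)^3$, and absorb any residual $(PX)^{O(\varepsilon)}$ into a fresh $\varepsilon$, obtaining the claimed bound $(yX)^{-\varepsilon}(PX)^3(y/P^3X^2)^{-\ell}$. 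The main work is the exponent bookkeeping across $k=0$ and $k=1$, and in particular the observation that shifting the $k=1$ contour one unit further left than the $k=0$ one exactly compensates for the $1/y$ prefactor in $\Phi^{\pm}$; there is no serious obstacle, since no poles are crossed and the contour integrals converge absolutely by the $\widetilde\phi$-decay balanced against the polynomial Stirling growth.
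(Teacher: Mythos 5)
The paper does not actually prove this lemma---it is quoted from Sun \cite{Sun}---but your contour-shift argument (Stirling for the gamma ratio via the reflection formula, repeated integration by parts in $\widetilde\phi$ to make the integrand negligible beyond $|\Im s|\asymp PX$, and a shift to $\Re s=\ell+\varepsilon$ for $k=0$ and $\Re s=\ell-1+\varepsilon$ for $k=1$, crossing no poles since those of $\Gamma((1+s+2k)/2)^3$ lie at $s=-1-2k-2n$) is the standard proof and is correct. Two small bookkeeping remarks: with the Stirling exponent $3\Re s+3k+3/2$ at $\Re s=\ell-1+\varepsilon$ one gets $\Phi_1(y)\ll y^{-\ell+1-\varepsilon}X^{-\ell-\varepsilon}(PX)^{3\ell+5/2+O(\varepsilon)}$ rather than $(PX)^{3\ell+1/2+O(\varepsilon)}$ (the $3k$ term seems to have been dropped), so after multiplying by $y^{-1}$ the $k=1$ piece matches the $k=0$ piece exactly instead of being smaller---the final bound is unaffected; and the closing majorization $(PX)^{5/2+O(\varepsilon)}\le(PX)^{3}$ requires $PX\ge1$, which is not literally among the stated hypotheses but holds wherever the lemma is applied ($P=M>4$, $X>1$).
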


By Lemma 3.2, for any fixed $\varepsilon>0$ and $yX\geq X^{\varepsilon}(P X)^3$,
$\Phi^{\pm}(y)$ are negligibly small.
Moreover, for $yX\gg X^{\varepsilon}$, we have an asymptotic formula for
$\Phi_k(y)$ (see \cite{Iv}, \cite{Li1}, \cite{RY}).

\begin{lemma}
Suppose that $\phi(y)$ is a smooth function of compact support on $[AX,BX]$,
where $X>0$ and $B>A>0$. Then for $y>0$, $yX\gg 1$, $\ell\geq 2$ and $k=0,1$, we have
\bna
\Phi_k(y)&=&(\pi^3 y)^{k+1}\sum_{j=1}^{\ell} \int_0^{\infty}\phi(u)
\left(a_k(j)e\left(3(y u)^{\frac{1}{3}}\right)+b_k(j)e\left(-3(y u)^{\frac{1}{3}}\right)\right)
\frac{\mathrm{d}u}{(\pi^3yu)^{\frac{j}{3}}}\\
&&+O_{A,B,\varepsilon,\ell}\left((\pi^3y)^k(\pi^3yX)^{-\frac{\ell}{3}+\frac{1}{2}+\varepsilon}\right),
\ena
where $a_k(j)$, $b_k(j)$ are constants with
\bna
a_0(1)=-\frac{2\sqrt{3\pi}}{6\pi i}, \quad b_0(1)=\frac{2\sqrt{3\pi}}{6\pi i},
\quad a_1(1)=b_1(1)=-\frac{2\sqrt{3\pi}}{6\pi}.
\ena
\end{lemma}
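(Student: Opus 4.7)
The plan is to view $\Phi_k(y)$ as an integral transform of $\phi$ whose kernel is a Mellin--Barnes integral, and to obtain an asymptotic expansion of that kernel by the saddle-point method. First I would interchange integrals in the defining formula, using $\widetilde{\phi}(-s-k)=\int_0^\infty \phi(u)u^{-s-k-1}\,du$, to rewrite
\[
\Phi_k(y)=\int_0^\infty \phi(u)\,u^{-k-1}\,\mathcal{F}_k(\pi^3 yu)\,du,\qquad
\mathcal{F}_k(w)=\frac{1}{2\pi i}\int_{(\sigma)} w^{-s}\,\frac{\Gamma((1+s+2k)/2)^3}{\Gamma(-s/2)^3}\,ds,
\]
so that the problem reduces to analysing $\mathcal{F}_k(w)$ as $w\to\infty$ with $w\asymp yX$.

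Second, I would apply the reflection formula $\Gamma(-s/2)\Gamma(1+s/2)=-\pi/\sin(\pi s/2)$ to convert the Gamma quotient into
\[
-\frac{\sin^{3}(\pi s/2)}{\pi^{3}}\,\Gamma((1+s+2k)/2)^{3}\Gamma(1+s/2)^{3},
\]
and then use $\sin^3(\pi s/2)=\tfrac{1}{8i}(-e^{3i\pi s/2}+3e^{i\pi s/2}-3e^{-i\pi s/2}+e^{-3i\pi s/2})$ to split $\mathcal{F}_k(w)$ into four Barnes integrals, each of the shape
\[
\frac{1}{2\pi i}\int_{(\sigma)} w^{-s}e^{\pm i m\pi s/2}\,\Gamma((1+s+2k)/2)^{3}\Gamma(1+s/2)^{3}\,ds,\qquad m\in\{1,3\}.
\]

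Third, for each integral I would insert Stirling, which gives leading behaviour $w^{-s}e^{\pm i m\pi s/2}(s/2)^{3s+c_k}e^{-3s}$ up to polynomial factors, and then solve the saddle equation $3\log(s/2)\pm i m\pi/2=\log w$. For $m=3$ the saddle lies on the imaginary axis at $s_0=\mp 2iw^{1/3}$, and evaluating the exponent there produces $-3s_0=\pm 6iw^{1/3}=\pm 6\pi i(yu)^{1/3}$, which is exactly the oscillating factor $e(\pm 3(yu)^{1/3})$ after restoring $w=\pi^3 yu$. For $m=1$ the saddle sits at $s_0=2w^{1/3}e^{\mp i\pi/6}$, where $\operatorname{Re}(-3s_0)=-3\sqrt{3}\,w^{1/3}$, so those two terms are exponentially small in $w$ and absorbed into the error. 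After shifting each relevant contour to pass through $s_0$ along steepest descent and carrying out the Gaussian expansion term by term, I would collect the powers of $(\pi^3 yu)^{-1/3}$ and read off the explicit constants $a_k(j),b_k(j)$; in particular $a_k(1),b_k(1)$ come from the leading Gaussian integral, which can be checked to yield the values stated.

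The main obstacle will be making all of this uniform in $w$ over the support $u\in[AX,BX]$, so that the truncated expansion carries a genuine error of size $(\pi^{3}y)^{k}(\pi^{3}yX)^{-\ell/3+1/2+\varepsilon}$ for every $\ell\geq 2$. Concretely this requires: (i) a quantitative Stirling remainder for the product $\Gamma((1+s+2k)/2)^{3}\Gamma(1+s/2)^{3}$ that remains valid as $\operatorname{Im}(s)$ grows like $w^{1/3}$; (ii) a careful justification that the contour may be shifted through the region between $\operatorname{Re}(s)=\sigma$ and the saddle without crossing extraneous poles; and (iii) bounding the off-saddle tails using the standard exponential decay of the integrand once $|\operatorname{Im}(s)|\gg w^{1/3}$. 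The numerical determination of the constants $a_k(j),b_k(j)$ is mechanical bookkeeping with Stirling's series, and the condition $yX\gg 1$ enters precisely to guarantee the saddle lies far enough from the origin for these expansions to be effective; the final result then matches the statement asserted.
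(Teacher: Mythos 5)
The paper offers no proof of this lemma: it is quoted as known, with a pointer to \cite{Iv}, \cite{Li1} and \cite{RY}, so there is no internal argument to compare yours against. Your saddle-point plan is, in substance, the argument those references carry out, and the computational skeleton checks out: the reflection formula correctly converts $\Gamma(-s/2)^{-3}$ into $-\pi^{-3}\sin^3(\pi s/2)\,\Gamma(1+s/2)^3$, the four exponentials $e^{\pm im\pi s/2}$ give saddles at $s_0=2w^{1/3}e^{\mp im\pi/6}$, the $m=3$ saddles $s_0=\mp 2iw^{1/3}$ produce exactly the phases $e(\pm 3(yu)^{1/3})$ after restoring $w=\pi^3yu$, and the $m=1$ saddles contribute $e^{-3\sqrt{3}\,w^{1/3}}$, hence go into the error. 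Two points would need care in a full write-up. First, the isolated kernel $\mathcal{F}_k(w)$ does not converge on an arbitrary line $\Re(s)=\sigma>-1-2k$: once the exponential decay of the six Gamma factors is cancelled by $e^{\pm 3i\pi s/2}$, the integrand is of size $|t|^{3\sigma+3k+3/2}$, which is integrable only for $\sigma$ near $-1-2k$; so you must first shift to $\Re(s)=-1-2k+\varepsilon$ (crossing no poles) before interchanging, or else keep $\widetilde{\phi}(-s-k)$ inside the $s$-integral and run the stationary phase on the double integral, which is what the cited sources in effect do. Second, the uniform error $O_{A,B,\varepsilon,\ell}\bigl((\pi^3y)^k(\pi^3yX)^{-\ell/3+1/2+\varepsilon}\bigr)$ and the explicit values of $a_k(1)$, $b_k(1)$ come from the quantitative Stirling expansion and the Gaussian integral at the saddle; you flag these as remaining work, but they constitute the bulk of the proof rather than routine bookkeeping. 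With those caveats, your outline is the standard and correct route to the lemma.
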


\medskip

\section{Transformation of $\mathscr{S}(X)$}
\setcounter{equation}{0}
\medskip

Applying the circle method, we have
\bna
\mathscr{S}(X)=\int_0^1\mathscr{F}^3(\alpha)\mathscr{G}(\alpha)\mathrm{d}\alpha,
\ena
where
\bna
\mathscr{F}(\alpha)=\sum_{n\in \mathcal{V}}e(\alpha n^2),
\ena
and
\bea
\mathscr{G}(\alpha)=\sum_{n\geq 1}\tau_3(n)e(-\alpha n)\phi\left(\frac{n}{X}\right).
\eea
Note that $\mathscr{F}^3(\alpha)\mathscr{G}(\alpha)$ is a periodic function of period 1. We have
\bna
\mathscr{S}(X)=\int_{-1/(Q+1)}^{Q/(Q+1)}\mathscr{F}^3(\alpha)\mathscr{G}(\alpha)\mathrm{d}\alpha,
\ena
where $Q$ is a large integer to be chosen later.
Then we can evaluate $\mathscr{S}(X)$ by
dissecting the interval $\left(-1/(Q+1),Q/(Q+1)\right]$ with Farey's points
of order $Q$ (see for example Iwaniec \cite{Iw}). Let
$\frac{a'}{q'}<\frac{a}{q}<\frac{a''}{q''}$ be adjacent points, which are determined
by the conditions
\[
Q<q+q',q+q''\leq q+Q, \quad aq'\equiv 1(\bmod q), \quad aq''\equiv -1(\bmod q).
\]
Then
\bna
\left(\frac{-1}{Q+1},\frac{Q}{Q+1}\right]=\mathop{\bigcup\bigcup}_{0\leq a<q\leq Q\atop (a,q)=1}
\left(\frac{a}{q}-\frac{1}{q(q+q')},\frac{a}{q}+\frac{1}{q(q+q'')}\right].
\ena
It follows that
\bna
\mathscr{S}(X)=\sum_{q\leq Q}\sideset{}{^*}\sum_{a=1}^q
\int\limits_{\mathscr{M}(a,q)}\mathscr{F}^3\left(\frac{a}{q}+\beta\right)\mathscr{G}\left(\frac{a}{q}+\beta\right)\mathrm{d}\beta,
\ena
where $*$ denotes the condition $(a,q)=1$ and
\[
\mathscr{M}(a,q)=\left(-\frac{1}{q(q+q')},\frac{1}{q(q+q'')}\right].
\]

Exchanging the order of the summation over $a$ and the integration over $\beta$ as in
Heath-Brown \cite{HB}, we have
\bea
\mathscr{S}(X)&=&\sum_{q\leq Q}\int\limits_{|\beta|\leq \frac{1}{qQ}}\sum_{v\bmod q}
\varrho(v,q,\beta)\sideset{}{^*}\sum_{a=1}^qe\left(-\frac{\overline{a}v}{q}\right)
\mathscr{F}^3\left(\frac{a}{q}+\beta\right)\mathscr{G}\left(\frac{a}{q}+\beta\right)\mathrm{d}\beta,
\eea
where $\varrho(v,q,\beta)$ satisfies
\bea
\varrho(v,q,\beta)\ll \frac{1}{1+|v|}.
\eea

For an asymptotic formula of $\mathscr{F}\left(\frac{a}{q}+\beta\right)$,
we quote the following result (see Theorem 4.1 in \cite{V} or Lemma 4.1 in \cite{Z}).

\begin{lemma}
Let $Q=[5\sqrt{x}]$. Suppose that $(a,q)=1$, $q\leq Q$ and
$|\beta|\leq 1/(qQ)$. We have
\bea
\mathscr{F}\left(\frac{a}{q}+\beta\right)=\frac{G(a,0;q)}{q}\Psi_0(\beta)
+\sum_{-\frac{3q}{2}<b\leq \frac{3q}{2}}G(a,b;q)\Psi(b,q,\beta),
\eea
where $G(a,b;q)$ is the Gauss sum
\bea
G(a,b;q)=\sum\limits_{d \bmod q}e\left(\frac{ad^2+bd}{q}\right),
\eea
$\Psi_0(\beta)$ is the integral
\bea
\Psi_0(\beta)=\int_0^{\sqrt{x}}e(\beta u^2)\mathrm{d}u,
\eea
and $\Psi(b,q,\beta)$ satisfies
\bea
\sum_{-\frac{3q}{2}<b\leq \frac{3q}{2}}|\Psi(b,q,\beta)|\ll \log(q+2).
\eea
\end{lemma}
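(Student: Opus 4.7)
The plan is the standard major-arc analysis of a quadratic Weyl sum via Poisson summation, followed by a regrouping that exploits the periodicity of the Gauss sum in $b$ modulo $q$. First I would write $n=mq+d$ with $1\le d\le q$ and $m\ge 0$, so that
\[
\mathscr{F}\!\left(\tfrac{a}{q}+\beta\right) = \sum_{d=1}^{q} e\!\left(\tfrac{ad^2}{q}\right)\!\!\sum_{\substack{m\ge 0\\ 1\le mq+d\le\sqrt{x}}}\!\! e\bigl(\beta(mq+d)^2\bigr).
\]
Applying Poisson summation to the inner $m$-sum, substituting $v=mq+d$ so the range becomes the $d$-independent interval $[1,\sqrt{x}]$, and interchanging summations so that the $d$-sum produces $\sum_d e((ad^2+bd)/q)=G(a,b;q)$, I obtain the exact identity
\[
\mathscr{F}\!\left(\tfrac{a}{q}+\beta\right) = \frac{1}{q}\sum_{b\in\mathbb{Z}} G(a,b;q)\,\mathcal{I}(b),\qquad \mathcal{I}(b) := \int_1^{\sqrt{x}} e(\beta v^2 - bv/q)\,dv,
\]
whose $b=0$ term yields $\frac{G(a,0;q)}{q}\Psi_0(\beta)$ up to a boundary piece $-\frac{G(a,0;q)}{q}\int_0^1 e(\beta v^2)\,dv$ of size $O(1/\sqrt{q})$.

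To obtain the finite range $(-3q/2,3q/2]$ in the statement, I would use the periodicity $G(a,b+q;q)=G(a,b;q)$ and group the Poisson expansion by residue class modulo $q$: writing $b=b'+kq$ with $b'\in(-q/2,q/2]$, the three central shifts $k\in\{-1,0,1\}$ cover precisely $(-3q/2,3q/2]$. I would then distribute the remaining tail $\sum_{|k|\ge 2}\mathcal{I}(b'+kq)$ equally among these three central representatives by defining, for $b\equiv b'\pmod q$ with $b\in(-3q/2,3q/2]$,
\[
\Psi(b,q,\beta) := \frac{\mathcal{I}(b)}{q} + \frac{1}{3q}\sum_{|k|\ge 2}\mathcal{I}(b'+kq),
\]
with a small $O(1/q)$ adjustment near $b=0$ to absorb the boundary piece from the main term. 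Since this tail depends on $b$ only through $b\bmod q$, the resulting $\Psi$ is independent of $a$ and the claimed identity is exact.

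The bound $\sum_b|\Psi(b,q,\beta)|\ll\log(q+2)$ then splits into two parts. The first term $\mathcal{I}(b)/q$ is controlled by one integration by parts: since $|\beta|\le 1/(qQ)$ with $Q=[5\sqrt{x}]$, the phase $\phi(v)=\beta v^2-bv/q$ satisfies $|\phi'(v)|\ge |b|/q-2\sqrt{x}|\beta|\gg |b|/q$ on $[1,\sqrt{x}]$ for $b\ne 0$, giving $|\mathcal{I}(b)|\ll q/|b|$ and $\sum_{0<|b|\le 3q/2}|\mathcal{I}(b)|/q\ll\log q$. For the tail piece, the crucial observation is the reverse-Poisson telescoping
\[
\sum_{k\in\mathbb{Z}}\mathcal{I}(b'+kq) = \sum_{n=1}^{\lfloor\sqrt{x}\rfloor}e(\beta n^2 - b'n/q) =: T(-b'),
\]
combined with the same IBP estimate $|T(-b')|\ll q/|b'|+1$ for $b'\ne 0$ (valid because the phase of $T$ has no stationary point in $[0,\sqrt{x}]$ under the assumption on $|\beta|$). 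This shows each tail $\sum_{|k|\ge 2}\mathcal{I}(b'+kq)$ is also $O(q/|b'|+1)$, which sums to $O(\log q)$ over $b'$. The main obstacle is precisely this tail analysis: the naive bounds $|\mathcal{I}(b)|\ll q/|b|$ and $|G(a,b;q)|\ll\sqrt{q}$ make the raw Poisson series only conditionally convergent and preclude a direct truncation, so the reverse-Poisson collapse is what controls the infinite tail inside each residue class, and the three-fold distribution of that tail is what forces the specific range $(-3q/2,3q/2]$ rather than a single period.
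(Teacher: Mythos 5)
Your argument is essentially correct, but note that the paper itself does not prove this lemma: it is quoted verbatim from Vaughan [V, Theorem 4.1] and Zhao [Z, Lemma 4.1], so the comparison is with those sources. The standard derivation there runs in the opposite direction to yours: one first inserts the exact orthogonality relation $\mathbf{1}_{n\equiv d\,(q)}=q^{-1}\sum_{b\bmod q}e(b(n-d)/q)$ to get the finite identity $\mathscr{F}(a/q+\beta)=q^{-1}\sum_{b\bmod q}G(a,b;q)T(-b,\beta)$ with $T(-b,\beta)=\sum_{n\le\sqrt{x}}e(\beta n^2-bn/q)$, and then applies truncated Poisson (van der Corput's B-process) to each $T(-b,\beta)$; since the phase derivative there lies within distance $O(1/q)$ of $-b/q\in(-1/2,1/2]$, only the dual frequencies $b+kq$ with $|k|\le 1$ survive, which is where the range $(-3q/2,3q/2]$ arises naturally. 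You instead do full Poisson first and then collapse the conditionally convergent tails by periodizing back to $T(-b')$; this is the same computation read backwards, and your three-way splitting of the tail is a cosmetic device to land in the stated range. Two points in your write-up need tightening. First, your ``exact identity'' from Poisson summation is only conditionally convergent and converges to the half-sum at jump points of the truncated integrand; the lattice point $v=1$ (class $d=1$, $m=0$) produces an $a$-dependent correction $\tfrac12 e(a/q+\beta)$ which cannot simply be dropped into an $a$-free $\Psi$ — you must either shift the endpoints to half-integers or re-expand this correction as $\sum_{b\bmod q}G(a,b;q)\cdot e(\beta)e(-b/q)/(2q)$, which adds an admissible $O(1/q)$ to each $\Psi(b,q,\beta)$. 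Second, the bound $|T(-b')|\ll q/|b'|$ for the exponential \emph{sum} is not integration by parts but Kusmin--Landau, which requires the distance of $f'(t)=2\beta t-b'/q$ to the nearest \emph{integer} (not merely $|f'|$) to be $\gg|b'|/q$; this holds precisely because you chose $|b'|\le q/2$ and $2|\beta|\sqrt{x}\le 2/(5q)$, and should be said. With these repairs the proof is complete and matches the cited results.
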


For $\mathscr{G}(\alpha)$ in (4.1), we apply Lemma 3.1
with $\phi_{\beta}(y)=\phi\left(\frac{y}{X}\right)e(-\beta y)$ getting
\bea
\mathscr{G}\left(\frac{a}{q}+\beta\right)
&=&\sum_{n\geq 1}\tau_3(n)e\left(-\frac{a n}{q}\right)
\phi_{\beta}(n)\nonumber\\
&=&\frac{q}{2\pi^{\frac{3}{2}}}\sum_{\pm}\sum_{n|q}
\sum_{m\geq 1}\frac{1}{nm}
\sum_{n_1|n}\sum_{n_2|\frac{n}{n_1}}\sigma_{0,0}\left(\frac{n}{n_1n_2},m\right)
S\left(\pm m,-\overline{a};\frac{q}{n}\right)
\Phi_{\beta}^{\pm}\left(\frac{mn^2}{q^3}\right)\nonumber\\
&+&\frac{1}{2q^2}\widetilde{\phi}_{\beta}(1)\sum_{n|q}
n\tau(n)P_2(n,q)
S\left(0,-\overline{a};\frac{q}{n}\right)\nonumber\\
&+&\frac{1}{2q^2}\widetilde{\phi}'_{\beta}(1)\sum_{n|q}
n\tau(n)P_1(n,q)
S\left(0,-\overline{a};\frac{q}{n}\right)
\nonumber\\
&+&\frac{1}{4q^2}\widetilde{\phi}_{\beta}''(1)\sum_{n|q}n\tau(n)
S\left(0,-\overline{a};\frac{q}{n}\right),
\eea
where
\bea
\Phi_{\beta}^{\pm}(y)=\Phi_0(y,\beta)\pm\frac{1}{i\pi^3y}\Phi_1(y,\beta)
\eea
with
\bea
\Phi_k(y,\beta)=\frac{1}{2\pi i}\int\limits_{\mathrm{Re}(s)=\sigma}\left(\pi^3y\right)^{-s}
\frac{\Gamma\left(\frac{1+s+2k}{2}\right)^3}{\Gamma\left(\frac{-s}{2}\right)^3}
\widetilde{\phi}_{\beta}(-s-k)\mathrm{d}s,
\eea
and $P_j(n,q)$ ($j=1,2$) defined in (3.2) and (3.3).

By (4.4) and (4.8), we have
\bea
\sideset{}{^*}\sum_{a=1}^qe\left(-\frac{\overline{a}v}{q}\right)
\mathscr{F}^3\left(\frac{a}{q}+\beta\right)\mathscr{G}\left(\frac{a}{q}+\beta\right)
=\sum_{j=1}^{16}\mathscr{B}_j(v,q,\beta),
\eea
where
\bea
\mathscr{B}_1(v,q,\beta)&=&\frac{q}{2\pi^{\frac{3}{2}}}\sum_{\pm}
\sum_{n|q}\sum_{m\geq 1}\frac{1}{nm}\sum_{n_1|n}
\sum_{n_2|\frac{n}{n_1}}\sigma_{0,0}\left(\frac{n}{n_1n_2},m\right)
\Phi_{\beta}^{\pm}\left(\frac{mn^2}{q^3}\right)\nonumber\\
&&\times
\sum_{-\frac{3q}{2}<b_j\leq \frac{3q}{2}\atop 1\leq j\leq 3}
\Psi(b_1,q,\beta)\Psi(b_2,q,\beta)\Psi(b_3,q,\beta)
\mathscr{C}(b_1,b_2,b_3,n,m,v;q),\\
\mathscr{B}_2(v,q,\beta)&=&\frac{3}{2\pi^{\frac{3}{2}}}\Psi_0(\beta)
\sum_{\pm}
\sum_{n|q}\sum_{m\geq 1}\frac{1}{nm}\sum_{n_1|n}
\sum_{n_2|\frac{n}{n_1}}\sigma_{0,0}\left(\frac{n}{n_1n_2},m\right)
\Phi_{\beta}^{\pm}\left(\frac{mn^2}{q^3}\right)\nonumber\\
&&\times\sum_{-\frac{3q}{2}<b_j\leq \frac{3q}{2}\atop j=1,2}\Psi(b_1,q,\beta)\Psi(b_2,q,\beta)
\mathscr{C}(0,b_1,b_2,n,m,v;q),\\
\mathscr{B}_3(v,q,\beta)&=&\frac{3}{2\pi^{\frac{3}{2}}}\frac{\Psi_0(\beta)^2}{q}
\sum_{\pm}
\sum_{n|q}\sum_{m\geq 1}\frac{1}{nm}\sum_{n_1|n}\sum_{n_2|\frac{n}{n_1}}
\sigma_{0,0}\left(\frac{n}{n_1n_2},m\right)
\Phi_{\beta}^{\pm}\left(\frac{mn^2}{q^3}\right)\nonumber\\
&&\times\sum_{-\frac{3q}{2}<b\leq \frac{3q}{2}}\Psi(b,q,\beta)
\mathscr{C}(0,0,b,n,m,v;q),\\
\mathscr{B}_4(v,q,\beta)&=&\frac{1}{2\pi^{\frac{3}{2}}}\frac{\Psi_0(\beta)^3}{q^2}
\sum_{\pm}
\sum_{n|q}\sum_{m\geq 1}\frac{1}{nm}\sum_{n_1|n}\sum_{n_2|\frac{n}{n_1}}
\sigma_{0,0}\left(\frac{n}{n_1n_2},m\right)\Phi_{\beta}^{\pm}\left(\frac{mn^2}{q^3}\right)\nonumber\\\nonumber\\
&&\times
\mathscr{C}(0,0,0,n,m,v;q),\\
\mathscr{B}_5(v,q,\beta)&=&\frac{1}{2}\frac{\widetilde{\phi}_{\beta}(1)}{q^2}
\sum_{n|q}n\tau(n)P_2(n,q)
\sum_{-\frac{3q}{2}<b_j\leq \frac{3q}{2}\atop 1\leq j\leq 3}
\Psi(b_1,q,\beta)\Psi(b_2,q,\beta)\Psi(b_3,q,\beta)\nonumber\\
&&\times
\mathscr{C}(b_1,b_2,b_3,n,0,v;q),\\
\mathscr{B}_6(v,q,\beta)&=&\frac{1}{2}\frac{\widetilde{\phi}'_{\beta}(1)}{q^2}
\sum_{n|q}n\tau(n)P_1(n,q)
\sum_{-\frac{3q}{2}<b_j\leq \frac{3q}{2}\atop 1\leq j\leq 3}
\Psi(b_1,q,\beta)\Psi(b_2,q,\beta)\Psi(b_3,q,\beta)\nonumber\\
&&\times
\mathscr{C}(b_1,b_2,b_3,n,0,v;q),\\
\mathscr{B}_7(v,q,\beta)&=&\frac{1}{4}\frac{\widetilde{\phi}''_{\beta}(1)}{q^2}
\sum_{n|q}n\tau(n)
\sum_{-\frac{3q}{2}<b_j\leq \frac{3q}{2}\atop 1\leq j\leq 3}
\Psi(b_1,q,\beta)\Psi(b_2,q,\beta)\Psi(b_3,q,\beta)\nonumber\\
&&\times
\mathscr{C}(b_1,b_2,b_3,n,0,v;q),\\
\mathscr{B}_8(v,q,\beta)&=&\frac{3}{2}\frac{\widetilde{\phi}_{\beta}(1)\Psi_0(\beta)}{q^3}
\sum_{n|q}n\tau(n)P_2(n,q)
\sum_{-\frac{3q}{2}<b_j\leq \frac{3q}{2}\atop j=1,2}
\Psi(b_1,q,\beta)\Psi(b_2,q,\beta)\nonumber\\
&&\times
\mathscr{C}(0,b_1,b_2,n,0,v;q),
\eea
\bea
\mathscr{B}_9(v,q,\beta)&=&\frac{3}{2}\frac{\widetilde{\phi}'_{\beta}(1)\Psi_0(\beta)}{q^3}
\sum_{n|q}n\tau(n)P_1(n,q)
\sum_{-\frac{3q}{2}<b_j\leq \frac{3q}{2}\atop j=1,2}
\Psi(b_1,q,\beta)\Psi(b_2,q,\beta)\nonumber\\
&&\times
\mathscr{C}(0,b_1,b_2,n,0,v;q),\\
\mathscr{B}_{10}(v,q,\beta)&=&\frac{3}{4}\frac{\widetilde{\phi}''_{\beta}(1)\Psi_0(\beta)}{q^3}
\sum_{n|q}n\tau(n)
\sum_{-\frac{3q}{2}<b_j\leq \frac{3q}{2}\atop j=1,2}
\Psi(b_1,q,\beta)\Psi(b_2,q,\beta)\nonumber\\
&&\times
\mathscr{C}(0,b_1,b_2,n,0,v;q),\\
\mathscr{B}_{11}(v,q,\beta)&=&\frac{3}{2}\frac{\widetilde{\phi}_{\beta}(1)\Psi_0(\beta)^2}{q^4}
\sum_{n|q}n\tau(n)P_2(n,q)
\sum_{-\frac{3q}{2}<b\leq \frac{3q}{2}}
\Psi(b,q,\beta)\nonumber\\
&&\times
\mathscr{C}(0,0,b,n,0,v;q),\\
\mathscr{B}_{12}(v,q,\beta)&=&\frac{3}{2}\frac{\widetilde{\phi}'_{\beta}(1)\Psi_0(\beta)^2}{q^4}
\sum_{n|q}n\tau(n)P_1(n,q)
\sum_{-\frac{3q}{2}<b\leq \frac{3q}{2}}
\Psi(b,q,\beta)\nonumber\\
&&\times
\mathscr{C}(0,0,b,n,0,v;q),\\
\mathscr{B}_{13}(v,q,\beta)&=&\frac{3}{4}\frac{\widetilde{\phi}''_{\beta}(1)\Psi_0(\beta)^2}{q^4}
\sum_{n|q}n\tau(n)
\sum_{-\frac{3q}{2}<b\leq \frac{3q}{2}}
\Psi(b,q,\beta)\nonumber\\
&&\times
\mathscr{C}(0,0,b,n,0,v;q),\\
\mathscr{B}_{14}(v,q,\beta)&=&\frac{1}{2}\frac{\widetilde{\phi}_{\beta}(1)\Psi_0(\beta)^3}{q^5}
\sum_{n|q}n\tau(n)P_2(n,q)
\mathscr{C}(0,0,0,n,0,v;q),\\
\mathscr{B}_{15}(v,q,\beta)&=&\frac{1}{2}\frac{\widetilde{\phi}'_{\beta}(1)\Psi_0(\beta)^3}{q^5}
\sum_{n|q}n\tau(n)P_1(n,q)
\mathscr{C}(0,0,0,n,0,v;q),\\
\mathscr{B}_{16}(v,q,\beta)&=&\frac{1}{4}\frac{\widetilde{\phi}''_{\beta}(1)\Psi_0(\beta)^3}{q^5}
\sum_{n|q}n\tau(n)
\mathscr{C}(0,0,0,n,0,v;q)
\eea
with
\bea
\mathscr{C}(b_1,b_2,b_3,n,m,v;q)&=&\sideset{}{^*}\sum_{a=1}^q
e\left(\frac{-\overline{a}v}{q}\right)
G(a,b_1;q)G(a,b_2;q)G(a,b_3;q) S\left(-\overline{a},\pm m;\frac{q}{n}\right).
\nonumber\\
\eea

We will show that $\mathscr{B}_j$, $1\leq j\leq 13$, contribute the remainder terms, and
$\mathscr{B}_j$, $14\leq j\leq 16$, contribute the main terms.

\medskip

\section{Contribution of $\mathscr{B}_j$, $1\leq j\leq 4$}
\setcounter{equation}{0}
\medskip

The estimation of $\mathscr{B}_j$, $1\leq j\leq 4$, are similar
as the arguments in \cite{Sun}.
Since the cancelation from the character sums $\mathscr{C}(b_1,b_2,b_3,n,m,v;q)$
is the main saving for our final result, we first have the following proposition.

\begin{proposition}
Let $q=q_1q_0q_3'$, $q_1|n$,
$4q_0$ square-full and $q_3'$ square-free.
For any $\varepsilon>0$, we have
\bea
\mathscr{C}(b_1,b_2,b_3,n,m,v;q)\ll_{\varepsilon} \frac
{(q_1q_0)^{3+\varepsilon}q_3'^{\frac{5}{2}+\varepsilon}}{\sqrt{n}}.
\eea
\end{proposition}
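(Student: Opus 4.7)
The proof rests on twisted multiplicativity: the Gauss sum $G(a,b;q)$, the Kloosterman sum $S(-\bar a,\pm m;q/n)$, and the additive character $e(-\bar a v/q)$ all split via CRT along any coprime factorisation of $q$. Writing $q=q_1q_0q_3'$ with the three factors pairwise coprime, the outer sum over $a\bmod q$ decomposes as well, yielding $\mathscr C(b_1,b_2,b_3,n,m,v;q)=\mathscr C^{(1)}\mathscr C^{(0)}\mathscr C^{(3)}$, where each local factor $\mathscr C^{(i)}$ is a character sum of the same shape but at modulus $q_i$. Letting $n_i$ denote the $q_i$-part of $n$, the Kloosterman factor in $\mathscr C^{(i)}$ has modulus $q_i/n_i$; the hypothesis $q_1\mid n$ forces $n_1=q_1$, so the Kloosterman piece of $\mathscr C^{(1)}$ is $S(\cdot,\cdot;1)=1$, and $n_1n_0n_3'=n$. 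The task is then to bound each local factor separately.

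\textbf{The squarefree factor.} On the squarefree modulus $q_3'$ we insert the classical evaluation $G(a,b;q_3')=\varepsilon(q_3')\left(\frac{a}{q_3'}\right)\sqrt{q_3'}\,e(-\overline{4a}\,b^{2}/q_3')$, valid for $(a,q_3')=1$ when $q_3'$ is odd and squarefree, to obtain
\begin{equation*}
\mathscr C^{(3)}=\varepsilon(q_3')^{3}q_3'^{3/2}\sideset{}{^*}\sum_{a\bmod q_3'}\left(\frac{a}{q_3'}\right)^{\!3}e\!\left(-\frac{\bar a\,v+\overline{4a}\,(b_1^{2}+b_2^{2}+b_3^{2})}{q_3'}\right)S\!\left(-\bar a,\pm m;\frac{q_3'}{n_3'}\right).
\end{equation*}
Opening the Kloosterman sum and swapping summation reduces $\mathscr C^{(3)}/q_3'^{3/2}$ to a two-variable complete character sum over $\mathbb F_{q_3'}$ with rational phase and a cubic multiplicative twist in the outer variable. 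Sums of precisely this shape admit square-root cancellation by the theorem of Fu invoked in Remark 1.4, yielding $\mathscr C^{(3)}\ll q_3'^{3/2}\cdot q_3'^{1+\varepsilon}=q_3'^{5/2+\varepsilon}$ after the standard divisor-bound losses.

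\textbf{The higher-prime-power factors.} On the $q_1$- and $q_0$-pieces we use near-trivial estimates: $|G(a,b;p^{\alpha})|\ll p^{\alpha/2}$ for $(a,p)=1$ at odd primes (the prime $2$ being absorbed into $q_0$ thanks to the hypothesis that $4q_0$ is squarefull), the Weil bound $|S(\cdot,\cdot;q_i/n_i)|\ll(q_i/n_i)^{1/2+\varepsilon}$ on the Kloosterman factor, and $\phi(q_i)\le q_i$ on the outer $a$-sum. Collecting these,
\begin{equation*}
\mathscr C^{(i)}\ll q_i\cdot q_i^{3/2+\varepsilon}\cdot(q_i/n_i)^{1/2}=q_i^{3+\varepsilon}/\sqrt{n_i},\qquad i\in\{0,1\}.
\end{equation*}
Multiplying the three local bounds and using $n=n_1n_0n_3'$ delivers the stated estimate.

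\textbf{Main obstacle.} The delicate step is the treatment of $\mathscr C^{(3)}$: to invoke Fu's theorem one must verify its geometric non-degeneracy hypotheses for the two-variable phase uniformly over the parameters $b_1,b_2,b_3,v,m$ and over the primes $p\mid q_3'$. In pathological configurations the expected square-root cancellation could fail and the crucial $\sqrt{q_3'}$ saving would be lost; by contrast, the analysis of $\mathscr C^{(1)}$ and $\mathscr C^{(0)}$ reduces to straightforward bookkeeping of trivial and Weil bounds.
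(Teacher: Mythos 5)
Your decomposition and division of labour (trivial Gauss-sum and Weil bounds on the $q_1$- and $q_0$-parts, square-root cancellation via Fu's theorem on the odd square-free part) is exactly the paper's argument, and your local bookkeeping --- including the observation that $q_1\mid n$ trivializes the Kloosterman factor at $q_1$ and that $n=n_1n_0n_3'$ reassembles the $1/\sqrt{n}$ --- matches the computations (9.5)--(9.8). Two remarks about the step you flag as the main obstacle, which is precisely the content of the paper's Lemma 9.1 and is where your proposal stops short. First, Fu's result is a statement over a prime field, so one must factor $q_3'=p_1\cdots p_s$ and apply twisted multiplicativity once more to reduce to a single prime $p$; your ``complete character sum over $\mathbb{F}_{q_3'}$'' has to be read prime by prime.

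Second, the pathological configurations you worry about do occur and Fu's theorem genuinely does not cover them, but they cause no loss because they are handled elementarily. After the Gauss-sum evaluation the phase at a prime $p\mid q_3'$ is $r_0\bar z-r_1y\bar z+r_2m\bar y$ with $r_0\equiv-\overline{4}\,(4v+b_1^2+b_2^2+b_3^2)\pmod p$. If $p\mid m$ the Kloosterman sum collapses to the Ramanujan sum $-1$ and the remaining one-variable sum is bounded trivially, giving $\mathscr{T}\ll p^{5/2}$. If $p\nmid m$ but $p\mid 4v+b_1^2+b_2^2+b_3^2$, the substitution $y\bar z\mapsto z$ splits the double sum into a product of two quadratic Gauss sums, each of modulus $\sqrt{p}$, again giving $\mathscr{T}\ll p^{5/2}$. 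In the remaining generic case one checks that the Newton polyhedron of $f=r_0z^{-1}-r_1z^{-1}y+r_2my^{-1}$ is the triangle with vertices $(-1,0)$, $(-1,1)$, $(0,-1)$ and that the face polynomials not containing the origin have no critical points in $(\overline{\mathbb{F}_p}^{\times})^2$ (immediate since $p\nmid r_0r_1r_2m$); Corollary 0.3 of Fu then bounds the two-variable sum by $O(p)$, whence $\mathscr{T}\ll p^{5/2}$ uniformly in $b_1,b_2,b_3,v,m$. So your outline is sound, but as written it is incomplete: the proof requires this explicit three-case analysis at each prime of $q_3'$, not a blanket appeal to square-root cancellation.
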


Next, we need the following result which will be proved in Section 8.
\begin{proposition}
For any $\varepsilon>0$, we have
\bna
\sum_{\pm}\sum_{m\geq 1}\frac{1}{m}\sum_{n_1|n}\sum_{n_2|\frac{n}{n_1}}
\sigma_{0,0}\left(\frac{n}{n_1n_2},m\right)
\left|\Phi_{\beta}^{\pm}\left(\frac{mn^2}{q^3}\right)\right|\ll_{\varepsilon}
X^{\varepsilon}n^{\varepsilon}(M+|\beta|^2X^2).
\ena
\end{proposition}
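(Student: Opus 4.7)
The plan is to establish a pointwise bound on $\Phi_\beta^\pm(y)$ from its Mellin--Barnes representation (4.10) and then execute the $m$-sum using the support truncation from Lemma 3.2 together with routine divisor estimates. Since $\Phi_\beta^\pm(y)=\Phi_0(y,\beta)\pm(i\pi^3 y)^{-1}\Phi_1(y,\beta)$, it suffices to control each $\Phi_k(y,\beta)$ for $k\in\{0,1\}$. The key observation is that on the vertical line $\mathrm{Re}(s)=-\tfrac12-k$, the arguments $(1+s+2k)/2$ and $-s/2$ become complex conjugates, so the Gamma ratio $\Gamma((1+s+2k)/2)^3/\Gamma(-s/2)^3$ has modulus identically $1$. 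Shifting the contour to this line (no poles are crossed for $k\in\{0,1\}$), we obtain $|\Phi_k(y,\beta)|\le y^{k+1/2}\int|\widetilde{\phi}_\beta(\tfrac12+k+it)|\,dt$, reducing the problem to an $L^1$-estimate of the Mellin transform.

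The transform $\widetilde{\phi}_\beta(s)=X^s\int_0^\infty\phi(u)e(-\beta Xu)u^{s-1}\,du$ is then controlled by two complementary estimates. First, integration by parts in $u$ gives $|\widetilde{\phi}_\beta(\sigma+it)|\ll X^\sigma\bigl((M+|\beta|X)/|t|\bigr)^j$ for any $j\ge 0$, confining the essential support in $|t|$ to $|t|\lesssim M+|\beta|X$. Second, when $|\beta|X\gg M$ the phase $-2\pi\beta Xu+t\log u$ has a stationary point $u_0=t/(2\pi\beta X)$ lying in $[\tfrac12,1]$ for $t\asymp \beta X$, and standard stationary-phase analysis refines the amplitude there to $\sim X^\sigma/\sqrt{|\beta|X}$ over an interval of length $O(M)$. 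Combining these two estimates with the crude bound from Lemma 3.2 in the complementary range, one obtains a pointwise bound $|\Phi_\beta^\pm(y)|\ll X^\varepsilon(M+|\beta|^2X^2)$ uniformly for $y$ in the effective support. Lemma 3.2 (applied with $P=M/X+|\beta|$, so $PX=M+|\beta|X$) then truncates the $m$-sum at $m\le M_0:=X^\varepsilon q^3(M+|\beta|X)^3/(n^2X)$, while the divisor sums are handled via $\sigma_{0,0}(n/(n_1n_2),m)\le\tau(m)$ and $\tau_3(n)\ll n^\varepsilon$; what remains is $\sum_{m\le M_0}\tau(m)/m\ll X^\varepsilon$, delivering the claimed bound $X^\varepsilon n^\varepsilon(M+|\beta|^2X^2)$.

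The main obstacle is obtaining the sharp shape $X^\varepsilon(M+|\beta|^2X^2)$ rather than a cruder hybrid such as $(M+|\beta|X)^{5/2}$ that would follow from $L^\infty$-times-support-length estimates on $\widetilde{\phi}_\beta$. This requires three simultaneous refinements --- placing the contour exactly where the Gamma ratio has unit modulus (killing the polynomial growth of the Gamma kernel); using stationary phase rather than trivial bounds on the Mellin transform (capturing the full $\sqrt{|\beta|X}$ savings when $|\beta|X$ is large); and carefully matching these estimates against the Lemma 3.2 truncation threshold, so that the two regimes $|\beta|X\le M$ and $|\beta|X>M$ contribute precisely $M$ and $|\beta|^2X^2$ respectively, with no residual intermediate powers of $(M+|\beta|X)$ surviving.
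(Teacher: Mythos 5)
Your overall skeleton (truncate the $m$-sum via Lemma 3.2 with $P=(M+|\beta|X)/X$, prove a uniform pointwise bound $|\Phi_\beta^{\pm}(y)|\ll X^{\varepsilon}(M+|\beta|^2X^2)$ on the effective support, then finish with trivial divisor estimates) matches the paper's, but the way you propose to prove the pointwise bound has a genuine gap. Shifting to the line $\mathrm{Re}(s)=-\tfrac12-k$ and taking absolute values gives $|\Phi_\beta^{\pm}(y)|\ll (yX)^{1/2}\cdot X^{-1/2}\int|\widetilde{\phi}_\beta(\tfrac12+it)|\,\mathrm{d}t$, and this factor $(yX)^{1/2}$ is fatal: on the top dyadic block of the effective range, $yX\asymp X^{\varepsilon}(M+|\beta|X)^3$, it alone contributes $(M+|\beta|X)^{3/2}\geq M^{3/2}$, while $\int|\widetilde{\phi}_\beta(\tfrac12+it)|\,\mathrm{d}t\gg X^{1/2}$ always (the integrand is $\asymp X^{1/2}$ for $|t|\ll 1$). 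So even in the simplest case $\beta=0$ your method yields at best $\ll X^{\varepsilon}M^{3/2}$, not $\ll X^{\varepsilon}M$, and since the $m$-sum $\sum_m m^{-1}|\Phi_\beta^{\pm}(cm)|$ is dominated by its top dyadic block, this loss survives into the final estimate. Your attempted rescue via stationary phase does not close this: the $t$-interval on which the stationary point $u_0=t/(2\pi\beta X)$ lies in the support has length $\asymp|\beta|X$, not $O(M)$ as you assert, and in any case refining the $L^1$-norm of $\widetilde{\phi}_\beta$ cannot remove the $(yX)^{1/2}$ growth, which comes from having discarded the oscillation of $y^{-it}$ against $\widetilde{\phi}_\beta$ in the $s$-integral.

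That oscillation is exactly what the paper exploits. For $yX\gg X^{\varepsilon}$ it replaces the Mellin--Barnes integral by the explicit asymptotic expansion of Lemma 3.3, so that $\Phi_k(y,\beta)$ becomes a sum of oscillatory integrals $\int u^{-j/3}\phi(u/X)e(-\beta u)e(\pm 3(yu)^{1/3})\,\mathrm{d}u$ ($j\geq 1$), and then integrates by parts \emph{twice} against the phase $e(\pm 3(yu)^{1/3})$ (a non-stationary-phase argument, not stationary phase). Each integration by parts trades a factor $X(yX)^{-1/3}$ against one derivative of $\phi(u/X)e(-\beta u)$, and because $\phi'$ is supported on a set of measure $O(X/M)$ the two derivatives produce $M+|\beta|^2X^2$ rather than $(M+|\beta|X)^2$. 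The resulting bound (8.8)--(8.9) is $\ll (yX)^{(1-j)/3}(M+|\beta|^2X^2)$, which is non-increasing in $y$ for $j\geq 1$; this uniformity in $y$ over the whole truncated range is precisely what your $(yX)^{1/2}$-type bound lacks. (The range $yX\ll X^{\varepsilon}$, which the paper handles by shifting to $\mathrm{Re}(s)=-1+\varepsilon$, is unproblematic either way.) To repair your argument you would need to carry out the saddle-point analysis of the $s$-integral --- which is what Lemma 3.3 packages --- rather than bound it in absolute value on a fixed vertical line.
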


By the second derivative test and the trivial estimation, $\Psi_0(\beta)$ in (4.6) is bounded by
\bea
\Psi_0(\beta)\ll \left(\frac{x}{1+|\beta|x}\right)^{\frac{1}{2}}.
\eea

Let $q$ be as in Proposition 5.1. By (4.7), (4.12) and Propositions 5.1-5.2, we have
\bna
\mathscr{B}_1(v,q,\beta)
&\ll& q\sum_{\pm}
\sum_{n|q}\sum_{m\geq 1}
\frac{1}{nm}\sum_{n_1|n}\sum_{n_2|\frac{n}{n_1}}
\sigma_{0,0}\left(\frac{n}{n_1n_2},m\right)
\left|\Phi_{\beta}^{\pm}\left(\frac{mn^2}{q^3}\right)\right|\nonumber\\
&&\times\sum_{-\frac{3q}{2}<b_j\leq \frac{3q}{2}\atop 1\leq j\leq 3}
|\Psi(b_1,q,\beta)||\Psi(b_2,q,\beta)|
|\Psi(b_3,q,\beta)|
|\mathscr{C}(b_1,b_2,b_3,n,m,v;q)|\nonumber\\
&\ll_{\varepsilon}&X^{\varepsilon}
\sum_{n\leq Q}n^{-\frac{3}{2}}
q_1^4q_0^4q_3'^{\frac{7}{2}}\sum_{\pm}\sum_{m\geq 1}\frac{1}{m}\sum_{n_1|n}\sum_{n_2|\frac{n}{n_1}}
\sigma_{0,0}\left(\frac{n}{n_1n_2},m\right)
\left|\Phi_{\beta}^{\pm}\left(\frac{mn^2}{q^3}\right)\right|\nonumber\\
&\ll_{\varepsilon}&X^{\varepsilon}(M+|\beta|^2X^2)
\sum_{n\leq Q}n^{-\frac{3}{2}+\varepsilon}
q_1^4q_0^4q_3'^{\frac{7}{2}}.
\ena
Then by (4.3), we have
\bea
&&\sum_{q\leq Q}\int\limits_{|\beta|\leq \frac{1}{q Q}}\sum_{v\bmod q}
\varrho(v,q,\beta)\mathscr{B}_1(v,q,\beta)\mathrm{d}\beta\nonumber\\
&\ll_{\varepsilon}&X^{\varepsilon}
\sum_{n\leq Q}n^{-\frac{3}{2}+\varepsilon}
\sum_{q_1|n}\sum_{q_3'\leq Q/q_1 \atop q_3' \,\mathrm{square-free}}
\sum_{q_0\leq Q/(q_1q_3') \atop 4q_0 \, \mathrm{square-full}}
q_1^4q_0^4q_3'^{\frac{7}{2}}
\int\limits_{|\beta|\leq \frac{1}{q_1q_0q_3' Q}}(M+|\beta|^2X^2)\mathrm{d}\beta\nonumber\\
&\ll_{\varepsilon}&X^{\varepsilon}\sum_{n\leq Q}
n^{-\frac{3}{2}+\varepsilon}
\sum_{q_1|n}\sum_{q_3'\leq Q/q_1}
\sum_{q_0\leq Q/(q_1q_3') \atop 4q_0 \, \mathrm{square-full}}
q_1^4q_0^4q_3'^{\frac{7}{2}}
 \left(\frac{M}{q_1q_0q_3'Q}+\frac{X^2}{(q_1q_0q_3'Q)^3} \right)\nonumber\\
&\ll_{\varepsilon}&\frac{X^{\varepsilon}M}{Q}
\sum_{n\leq Q}n^{-\frac{3}{2}+\varepsilon}
\sum_{q_1|n}q_1^3\sum_{q_3'\leq Q/q_1}q_3'^{\frac{5}{2}}
\left(\frac{Q}{q_1q_3'}\right)^{\frac{7}{2}}\nonumber\\
&&+\frac{X^{2+\varepsilon}}{Q^3}
\sum_{n\leq Q}n^{-\frac{3}{2}+\varepsilon}
\sum_{q_1|n}q_1\sum_{q_3'\leq Q/q_1}q_3'^{\frac{1}{2}}\left(\frac{Q}{q_1q_3'}\right)^{\frac{3}{2}}\nonumber\\
&\ll_{\varepsilon}&X^{\varepsilon}MQ^{\frac{5}{2}}
+\frac{X^{2+\varepsilon}}{Q^{\frac{3}{2}}}\nonumber\\
&\ll_{\varepsilon}&Mx^{\frac{5}{4}+\varepsilon}.
\eea

Moreover, by (4.7), (4.13) and Propositions 5.1-5.2, we have
\bna
\mathscr{B}_2(v,q,\beta)
&\ll_{\varepsilon}&x^{\varepsilon}\left(\frac{x}{1+|\beta|x}\right)^{\frac{1}{2}}
\sum_{n\leq Q}
n^{-\frac{3}{2}}q_1^3q_0^3q_3'^{\frac{5}{2}}\,X^{\varepsilon}n^{\varepsilon}(M+|\beta|^2X^2)\nonumber\\
&\ll_{\varepsilon}&x^{\frac{1}{2}+\varepsilon}\left(\left({1+|\beta|x}\right)^{\frac{3}{2}}
+\frac{M}{\sqrt{1+|\beta|x}}\right)
\sum_{n\leq Q}n^{-\frac{3}{2}+\varepsilon}
q_1^3q_0^3q_3'^{\frac{5}{2}}.
\ena
It follows that
\bea
&&\sum_{q\leq Q}\int\limits_{|\beta|\leq \frac{1}{q Q}}\sum_{v\bmod q}
\varrho(v,q,\beta)\mathscr{B}_2(v,q,\beta)\mathrm{d}\beta\nonumber\\
&\ll_{\varepsilon}&x^{\frac{1}{2}+\varepsilon}
\sum_{n\leq Q}
n^{-\frac{3}{2}+\varepsilon}
\sum_{q_1|n}\sum_{q_3'\leq Q/q_1 \atop q_3' \,\mathrm{square-free}}
\sum_{q_0\leq Q/(q_1q_3') \atop 4q_0 \, \mathrm{square-full}}
q_1^3q_0^3q_3'^{\frac{5}{2}}
\int\limits_{|\beta|\leq \frac{1}{q_1q_0q_3' Q}}(1+|\beta|x)^{\frac{3}{2}}\mathrm{d}\beta\nonumber\\
&&+x^{\varepsilon}M
\sum_{n\leq Q}
n^{-\frac{3}{2}+\varepsilon}
\sum_{q_1|n}\sum_{q_3'\leq Q/q_1 \atop q_3' \,\mathrm{square-free}}
\sum_{q_0\leq Q/(q_1q_3') \atop 4q_0 \, \mathrm{square-full}}
q_1^3q_0^3q_3'^{\frac{5}{2}}
\int\limits_{|\beta|\leq \frac{1}{q_1q_0q_3' Q}}\frac{1}{\sqrt{x^{-1}+|\beta|}}\mathrm{d}\beta
\nonumber
\eea
\bea
&\ll_{\varepsilon}&x^{\frac{1}{2}+\varepsilon}
\sum_{n\leq Q}
n^{-\frac{3}{2}+\varepsilon}
\sum_{q_1|n}\sum_{q_3'\leq Q/q_1 }
\sum_{q_0\leq Q/(q_1q_3') \atop 4q_0 \, \mathrm{square-full}}
q_1^3q_0^3q_3'^{\frac{5}{2}}
 \left(\frac{1}{q_1q_0q_3'Q}+\frac{x^{\frac{3}{2}}}{(q_1q_0q_3'Q)^{\frac{5}{2}}} \right)\nonumber\\
&&+x^{\varepsilon}M
\sum_{n\leq Q}
n^{-\frac{3}{2}+\varepsilon}
\sum_{q_1|n}\sum_{q_3'\leq Q/q_1 }
\sum_{q_0\leq Q/(q_1q_3') \atop 4q_0 \, \mathrm{square-full}}
q_1^3q_0^3q_3'^{\frac{5}{2}}(q_1q_0q_3'Q)^{-\frac{1}{2}}\nonumber\\
&\ll_{\varepsilon}&\frac{x^{\frac{1}{2}+\varepsilon}}{Q}
\sum_{n\leq Q}
n^{-\frac{3}{2}+\varepsilon}
\sum_{q_1|n}q_1^2\sum_{q_3'\leq Q/q_1}q_3'^{\frac{3}{2}}
\left(\frac{Q}{q_1q_3'}\right)^{\frac{5}{2}}\nonumber\\
&&+\frac{x^{2+\varepsilon}}{Q^{\frac{5}{2}}}
\sum_{n\leq Q}
n^{-\frac{3}{2}+\varepsilon}
\sum_{q_1|n}q_1^{\frac{1}{2}}\sum_{q_3'\leq Q/q_1}\frac{Q}{q_1q_3'}\nonumber\\
&&+\frac{x^{\varepsilon}M}{Q^{\frac{1}{2}}}
\sum_{n\leq Q}
n^{-\frac{3}{2}+\varepsilon}
\sum_{q_1|n}q_1^{\frac{5}{2}}\sum_{q_3'\leq Q/q_1}q_3'^2
\left(\frac{Q}{q_1q_3'}\right)^3\nonumber\\
&\ll_{\varepsilon}&x^{\frac{1}{2}+\varepsilon}Q^{\frac{3}{2}}
+\frac{x^{2+\varepsilon}}{Q^{\frac{3}{2}}}+x^{\varepsilon}MQ^{\frac{5}{2}}\nonumber\\
&\ll_{\varepsilon}&Mx^{\frac{5}{4}+\varepsilon}.
\eea

Further, by (4.7), (4.14) and Propositions 5.1-5.2, we have
\bna
\mathscr{B}_3(v,q,\beta)
&\ll_{\varepsilon}&
\frac{x^{1+\varepsilon}}{1+|\beta|x}
\sum_{n\leq Q}
n^{-\frac{3}{2}}
q_1^2q_0^2q_3'^{\frac{3}{2}}\,X^{\varepsilon}n^{\varepsilon}(M+|\beta|^2X^2)\nonumber\\
&\ll_{\varepsilon}&
x^{1+\varepsilon}\left(1+|\beta|x+\frac{M}{1+|\beta|x}\right)
\sum_{n\leq Q}
n^{-\frac{3}{2}+\varepsilon}
q_1^2q_0^2q_3'^{\frac{3}{2}}.
\ena
It follows from this estimate and (4.3) that
\bea
&&\sum_{q\leq Q}\int\limits_{|\beta|\leq \frac{1}{q Q}}\sum_{v\bmod q}
\varrho(v,q,\beta)\mathscr{B}_3(v,q,\beta)\mathrm{d}\beta\nonumber\\
&\ll_{\varepsilon}&x^{1+\varepsilon}\sum_{n\leq Q}
n^{-\frac{3}{2}+\varepsilon}
\sum_{q_1|n}\sum_{q_3'\leq Q/q_1 \atop q_3' \,\mathrm{square-free}}
\sum_{q_0\leq Q/(q_1q_3') \atop 4q_0 \, \mathrm{square-full}}
q_1^2q_0^2q_3'^{\frac{3}{2}}
\int\limits_{|\beta|\leq \frac{1}{q_1q_0q_3' Q}}(1+|\beta|x)\mathrm{d}\beta\nonumber\\
&&+x^{\varepsilon}M\sum_{n\leq Q}
n^{-\frac{3}{2}+\varepsilon}
\sum_{q_1|n}\sum_{q_3'\leq Q/q_1 \atop q_3' \,\mathrm{square-free}}
\sum_{q_0\leq Q/(q_1q_3') \atop 4q_0 \, \mathrm{square-full}}
q_1^2q_0^2q_3'^{\frac{3}{2}}
\int\limits_{|\beta|\leq \frac{1}{q_1q_0q_3' Q}}(x^{-1}+|\beta|)^{-1}\mathrm{d}\beta\nonumber
\eea
\bea
&\ll_{\varepsilon}&x^{1+\varepsilon}\sum_{n\leq Q}
n^{-\frac{3}{2}+\varepsilon}
\sum_{q_1|n}\sum_{q_3'\leq Q/q_1 }
\sum_{q_0\leq Q/(q_1q_3') \atop 4q_0 \, \mathrm{square-full}}
q_1^2q_0^2q_3'^{\frac{3}{2}}
 \left(\frac{1}{q_1q_0q_3'Q}+\frac{x}{(q_1q_0q_3'Q)^2} \right)\nonumber\\
&&+x^{\varepsilon}M\sum_{n\leq Q}
n^{-\frac{3}{2}+\varepsilon}
\sum_{q_1|n}\sum_{q_3'\leq Q/q_1 }
\sum_{q_0\leq Q/(q_1q_3') \atop 4q_0 \, \mathrm{square-full}}
q_1^2q_0^2q_3'^{\frac{3}{2}}\nonumber\\
&\ll_{\varepsilon}&\frac{x^{1+\varepsilon}}{Q}
\sum_{n\leq Q}
n^{-\frac{3}{2}+\varepsilon}
\sum_{q_1|n}q_1\sum_{q_3'\leq Q/q_1}q_3'^{\frac{1}{2}}
\left(\frac{Q}{q_1q_3'}\right)^{\frac{3}{2}}\nonumber\\
&&+\frac{x^{2+\varepsilon}}{Q^2}
\sum_{n\leq Q}
n^{-\frac{3}{2}+\varepsilon}
\sum_{q_1|n}\sum_{q_3'\leq Q/q_1}q_3'^{-\frac{1}{2}}
\left(\frac{Q}{q_1q_3'}\right)^{\frac{1}{2}}\nonumber\\
&&+x^{\varepsilon}M
\sum_{n\leq Q}
n^{-\frac{3}{2}+\varepsilon}
\sum_{q_1|n}q_1^2\sum_{q_3'\leq Q/q_1}q_3'^{\frac{3}{2}}
\left(\frac{Q}{q_1q_3'}\right)^{\frac{5}{2}}\nonumber\\
&\ll_{\varepsilon}&x^{1+\varepsilon}Q^{\frac{1}{2}}
+\frac{x^{2+\varepsilon}}{Q^{\frac{3}{2}}}+x^{\varepsilon}MQ^{\frac{5}{2}}\nonumber\\
&\ll_{\varepsilon}&Mx^{\frac{5}{4}+\varepsilon}.
\eea

Lastly, by (4.7), (4.15) and Propositions 5.1-5.2, we have
\bna
\mathscr{B}_4(v,q,\beta)
&\ll_{\varepsilon}&x^{\varepsilon}
\left(\frac{x}{1+|\beta|x}\right)^{\frac{3}{2}}
\sum_{n\leq Q}n^{-\frac{3}{2}}
q_1q_0q_3'^{\frac{1}{2}}\,X^{\varepsilon}n^{\varepsilon}(M+|\beta|^2X^2)\nonumber\\
&\ll_{\varepsilon}&\left(x^{\frac{3}{2}+\varepsilon}
\left(1+|\beta|x\right)^{\frac{1}{2}}+\frac{x^{\varepsilon}M}{(x^{-1}+|\beta|)^{\frac{3}{2}}}\right)
\sum_{n\leq Q}
n^{-\frac{3}{2}+\varepsilon}
q_1q_0q_3'^{\frac{1}{2}}.
\ena
By (4.3) and the estimate as above, we have
\bea
&&\sum_{q\leq Q}\int\limits_{|\beta|\leq \frac{1}{q Q}}\sum_{v\bmod q}
\varrho(v,q,\beta)\mathscr{B}_4(v,q,\beta)\mathrm{d}\beta\nonumber\\
&\ll_{\varepsilon}&x^{\frac{3}{2}+\varepsilon}
\sum_{n\leq Q}
n^{-\frac{3}{2}+\varepsilon}
\sum_{q_1|n}\sum_{q_3'\leq Q/q_1 \atop q_3' \,\mathrm{square-free}}
\sum_{q_0\leq Q/(q_1q_3') \atop 4q_0 \, \mathrm{square-full}}
q_1q_0q_3'^{\frac{1}{2}}
\int\limits_{|\beta|\leq \frac{1}{q_1q_0q_3'Q}}(1+|\beta|x)^{\frac{1}{2}}\mathrm{d}\beta\nonumber\\
&&+
x^{\varepsilon}M
\sum_{n\leq Q}
n^{-\frac{3}{2}+\varepsilon}
\sum_{q_1|n}\sum_{q_3'\leq Q/q_1 \atop q_3' \,\mathrm{square-free}}
\sum_{q_0\leq Q/(q_1q_3') \atop 4q_0 \, \mathrm{square-full}}
q_1q_0q_3'^{\frac{1}{2}}
\int\limits_{|\beta|\leq \frac{1}{q_1q_0q_3'Q}}(x^{-1}+|\beta|)^{-\frac{3}{2}}\mathrm{d}\beta
\nonumber
\eea
\bea
&\ll_{\varepsilon}&x^{\frac{3}{2}+\varepsilon}
\sum_{n\leq Q}
n^{-\frac{3}{2}+\varepsilon}
\sum_{q_1|n}\sum_{q_3'\leq Q/q_1 }
\sum_{q_0\leq Q/(q_1q_3') \atop 4q_0 \, \mathrm{square-full}} q_1q_0q_3'^{\frac{1}{2}}
 \left(\frac{1}{q_1q_0q_3'Q}+\frac{x^{\frac{1}{2}}}{(q_1q_0q_3'Q)^{\frac{3}{2}}} \right)\nonumber\\
&&+Mx^{\frac{1}{2}+\varepsilon}
\sum_{n\leq Q}
n^{-\frac{3}{2}+\varepsilon}
\sum_{q_1|n}\sum_{q_3'\leq Q/q_1 }
\sum_{q_0\leq Q/(q_1q_3') \atop 4q_0 \, \mathrm{square-full}} q_1q_0q_3'^{\frac{1}{2}}
\nonumber\\
&\ll_{\varepsilon}&\frac{x^{\frac{3}{2}+\varepsilon}}{Q}
\sum_{n\leq Q}
n^{-\frac{3}{2}+\varepsilon}
\sum_{q_1|n}\sum_{q_3'\leq Q/q_1}q_3'^{-\frac{1}{2}}
\left(\frac{Q}{q_1q_3'}\right)^{\frac{1}{2}}\nonumber\\
&&+\frac{x^{2+\varepsilon}}{Q^{\frac{3}{2}}}
\sum_{n\leq Q}
n^{-\frac{3}{2}+\varepsilon}
\sum_{q_1|n}q_1^{-\frac{1}{2}}\sum_{q_3'\leq Q/q_1}q_3'^{-1}\nonumber\\
&&+Mx^{\frac{1}{2}+\varepsilon}
\sum_{n\leq Q}
n^{-\frac{3}{2}+\varepsilon}
\sum_{q_1|n}q_1\sum_{q_3'\leq Q/q_1 }q_3'^{\frac{1}{2}}
\left(\frac{Q}{q_1q_3'}\right)^{\frac{3}{2}}
\nonumber\\
&\ll_{\varepsilon}&\frac{x^{\frac{3}{2}+\varepsilon}}{Q^{\frac{1}{2}}}
+\frac{x^{2+\varepsilon}}{Q^{\frac{3}{2}}}+Mx^{\frac{1}{2}+\varepsilon}Q^{\frac{3}{2}}\nonumber\\
&\ll_{\varepsilon}&Mx^{\frac{5}{4}+\varepsilon}.
\eea

By (4.2), (4.11) and (5.3)-(5.6), the contribution from
$\mathscr{B}_j$, $j=1,2,3,4$, is $O_{\varepsilon}(Mx^{\frac{5}{4}+\varepsilon})$.

\medskip

\section{Contribution of $\mathscr{B}_j$, $5\leq j\leq 13$}
\setcounter{equation}{0}
\medskip

First, we note that (recall (3.2) and (3.3))
\bea
P_j(n,q)\ll (\log (n+2)(q+2))^j, \qquad j=1,2
\eea
and
\bea
\widetilde{\phi_{\beta}}^{(j)}(1)=\int_0^{\infty}\phi\left(\frac{u}{X}\right)e(-\beta u)(\log u)^j\mathrm{d}u
\ll \frac{x(\log x)^j}{1+|\beta|x}, \qquad j=0,1,2.
\eea
Next, bounding the character sum $\mathscr{C}(b_1,b_2,b_3,n,m,v;q)$ by Weil's bound
for Kloosterman sums, we have
\bea
\mathscr{C}(b_1,b_2,b_3,n,m,v;q)\ll q^{\frac{5}{2}}\left(\frac{q}{n}\right)^{\frac{1}{2}}
\tau\left(\frac{q}{n}\right)\ll_{\varepsilon}q^{3+\varepsilon}n^{-\frac{1}{2}}.
\eea

By (4.7), (4.16)-(4.18) and (6.1)-(6.3), we have, for $j=5,6,7$,
\bea
\mathscr{B}_j(v,q,\beta)\ll_{\varepsilon}\frac{1}{q^2}\frac{x(\log x)^2}{1+|\beta|x}
\sum_{n|q}n\tau(n)q^{3+\varepsilon}n^{-\frac{1}{2}}(\log(q+2))^5
\ll_{\varepsilon}\frac{x^{\varepsilon}q^{\frac{3}{2}+\varepsilon}}{x^{-1}+|\beta|}.
\eea
By (4.3) and (6.4), we obtain, for $j=5,6,7$,
\bea
&&\sum_{q\leq Q}\int\limits_{|\beta|\leq \frac{1}{q Q}}\sum_{v\bmod q}
\varrho(v,q,\beta)\mathscr{B}_j(v,q,\beta)\mathrm{d}\beta\nonumber\\
&\ll_{\varepsilon}&x^{\varepsilon}\sum_{q\leq Q}q^{\frac{3}{2}+\varepsilon}\int\limits_{|\beta|\leq \frac{1}{q Q}}
\frac{1}{x^{-1}+|\beta|}\mathrm{d}\beta\nonumber\\
&\ll_{\varepsilon}&x^{\varepsilon}Q^{\frac{5}{2}+\varepsilon}.
\eea

By (4.7), (4.19)-(4.21) and (6.1)-(6.3), we have, for $j=8,9,10$,
\bea
\mathscr{B}_j(v,q,\beta)&\ll_{\varepsilon}&\frac{1}{q^3}\frac{x(\log x)^2}{1+|\beta|x}
\left(\frac{x}{1+|\beta|x}\right)^{\frac{1}{2}}
\sum_{n|q}n\tau(n)q^{3+\varepsilon}n^{-\frac{1}{2}}(\log(q+2))^4\nonumber\\
&\ll_{\varepsilon}&x^{\varepsilon}q^{\frac{1}{2}+\varepsilon}
\left(\frac{1}{x^{-1}+|\beta|}\right)^{\frac{3}{2}}.
\eea
By (4.3) and (6.6), we obtain, for $j=8,9,10$,
\bea
&&\sum_{q\leq Q}\int\limits_{|\beta|\leq \frac{1}{q Q}}\sum_{v\bmod q}
\varrho(v,q,\beta)\mathscr{B}_j(v,q,\beta)\mathrm{d}\beta\nonumber\\
&\ll_{\varepsilon}&x^{\varepsilon}\sum_{q\leq Q}q^{\frac{1}{2}+\varepsilon}
\int\limits_{|\beta|\leq \frac{1}{q Q}}
\left(\frac{1}{x^{-1}+|\beta|}\right)^{\frac{3}{2}}\mathrm{d}\beta\nonumber\\
&\ll_{\varepsilon}&x^{\frac{1}{2}+\varepsilon}Q^{\frac{3}{2}+\varepsilon}.
\eea

By (4.7), (4.22)-(4.24) and (6.1)-(6.3), we have, for $j=11,12,13$,
\bea
\mathscr{B}_j(v,q,\beta)&\ll_{\varepsilon}&\frac{1}{q^4}\frac{x(\log x)^2}{1+|\beta|x}
\frac{x}{1+|\beta|x}
\sum_{n|q}n\tau(n)q^{3+\varepsilon}n^{-\frac{1}{2}}(\log(q+2))^3\nonumber\\
&\ll_{\varepsilon}&x^{\varepsilon}q^{-\frac{1}{2}+\varepsilon}
\left(\frac{1}{x^{-1}+|\beta|}\right)^2.
\eea
By (4.3) and (6.8), we obtain, for $j=11,12,13$,
\bea
&&\sum_{q\leq Q}\int\limits_{|\beta|\leq \frac{1}{q Q}}\sum_{v\bmod q}
\varrho(v,q,\beta)\mathscr{B}_j(v,q,\beta)\mathrm{d}\beta\nonumber\\
&\ll_{\varepsilon}&x^{\varepsilon}\sum_{q\leq Q}q^{-\frac{1}{2}+\varepsilon}\int\limits_{|\beta|\leq \frac{1}{q Q}}
\left(\frac{1}{x^{-1}+|\beta|}\right)^2\mathrm{d}\beta\nonumber\\
&\ll_{\varepsilon}&x^{1+\varepsilon}Q^{\frac{1}{2}+\varepsilon}.
\eea

By (4.2), (4.11), (6.5), (6.7) and (6.9), the contribution from
$\mathscr{B}_j$, $5\leq j\leq 13$, is $O_{\varepsilon}(x^{\frac{5}{4}+\varepsilon})$.

\medskip

\section{Computation of the main terms}
\setcounter{equation}{0}
\medskip

The three sums $\mathscr{B}_{14}(v,q,\beta)$, $\mathscr{B}_{15}(v,q,\beta)$
and $\mathscr{B}_{16}(v,q,\beta)$ in (4.25)-(4.27) contribute the main terms.
Replacing $\widetilde{\phi_{\beta}}^{(j)}(1)$ by
\bea
\mathcal{\vartheta}^{\flat,j}(\beta)=\int_{X/2}^Xe(-\beta u)(\log u)^j\mathrm{d}u, \qquad j=0,1,2,
\eea
we need to estimate the remainder terms from
\bna
\vartheta^{\sharp,j}(\beta)=\widetilde{\phi_{\beta}}^{(j)}(1)-\mathcal{\vartheta}^{\flat,j}(\beta).
\ena
Write correspondingly
\bna
\mathscr{B}_j^{\sharp}(v,q,\beta)=\mathscr{B}_j(v,q,\beta)-\mathscr{B}_j^{\flat}(v,q,\beta),
\qquad
j=14,15,16.
\ena

First, we evaluate the remainder terms from $\mathscr{B}_j^{\sharp}(v,q,\beta)$, $j=14,15,16$. Notice that
\bna
\vartheta^{\sharp,j}(\beta)=
\int_{X/2}^X\left(\phi\left(\frac{u}{X}\right)-1\right)e(-\beta u)(\log u)^j\mathrm{d}u
\ll X M^{-1}(\log X)^j.
\ena
Hence
\bea
&&\sum_{q\leq Q}\int\limits_{|\beta|\leq \frac{1}{q Q}}\sum_{v\bmod q}
\varrho(v,q,\beta)\mathscr{B}_{14}^{\sharp}(v,q,\beta)\mathrm{d}\beta\nonumber\\
&=&\frac{1}{2}\sum_{q\leq Q}\frac{1}{q^5}\int\limits_{|\beta|\leq \frac{1}{q Q}}
\sum_{v\bmod q}\varrho(v,q,\beta)\vartheta^{\sharp,0}(\beta)\Psi_0(\beta)^3
\nonumber\\
&&\times \sum_{n|q}n\tau(n)P_2(n,q)\mathscr{C}(0,0,0,n,0,v;q)\mathrm{d}\beta\nonumber\\
&\ll_{\varepsilon}&X^{1+\varepsilon}M^{-1}\sum_{q\leq Q}q^{-2+\varepsilon}
\sum_{n|q}n^{\frac{1}{2}}\tau(n)|P_2(n,q)|
\int\limits_{|\beta|\leq \frac{1}{q Q}}
\left(\frac{1}{x^{-1}+|\beta|}\right)^{\frac{3}{2}}\mathrm{d}\beta\nonumber\\
&\ll_{\varepsilon}&x^{\frac{3}{2}+\varepsilon}M^{-1}\sum_{q\leq Q}q^{-2+\varepsilon}
\sum_{n|q}n^{\frac{1}{2}}\tau(n)|P_2(n,q)|\nonumber\\
&\ll_{\varepsilon}&x^{\frac{3}{2}+\varepsilon}M^{-1}.
\eea
Here we have used (4.3), (5.2) and (6.3).
Similarly,
\bea
&&\sum_{q\leq Q}\int\limits_{|\beta|\leq \frac{1}{q Q}}\sum_{v\bmod q}
\varrho(v,q,\beta)\mathscr{B}_{15}^{\sharp}(v,q,\beta)\mathrm{d}\beta\nonumber\\
&=&\frac{1}{2}\sum_{q\leq Q}\frac{1}{q^5}\int\limits_{|\beta|\leq \frac{1}{q Q}}
\sum_{v\bmod q}\varrho(v,q,\beta)\vartheta^{\sharp,1}(\beta)\Psi_0(\beta)^3
\nonumber\\
&&\times \sum_{n|q}n\tau(n)P_1(n,q)\mathscr{C}(0,0,0,n,0,v;q)\mathrm{d}\beta\nonumber\\
&\ll_{\varepsilon}&X^{1+\varepsilon}M^{-1}\sum_{q\leq Q}q^{-2+\varepsilon}
\sum_{n|q}n^{\frac{1}{2}}\tau(n)|P_1(n,q)|
\int\limits_{|\beta|\leq \frac{1}{q Q}}
\left(\frac{1}{x^{-1}+|\beta|}\right)^{\frac{3}{2}}\mathrm{d}\beta\nonumber\\
&\ll_{\varepsilon}&x^{\frac{3}{2}+\varepsilon}M^{-1}\sum_{q\leq Q}q^{-2+\varepsilon}
\sum_{n|q}n^{\frac{1}{2}}\tau(n)|P_1(n,q)|\nonumber\\
&\ll_{\varepsilon}&x^{\frac{3}{2}+\varepsilon}M^{-1},
\eea
and
\bea
&&\sum_{q\leq Q}\int\limits_{|\beta|\leq \frac{1}{q Q}}\sum_{v\bmod q}
\varrho(v,q,\beta)\mathscr{B}_{16}^{\sharp}(v,q,\beta)\mathrm{d}\beta\nonumber\\
&=&\frac{1}{4}\sum_{q\leq Q}\frac{1}{q^5}\int\limits_{|\beta|\leq \frac{1}{q Q}}
\sum_{v\bmod q}\varrho(v,q,\beta)\vartheta^{\sharp,2}(\beta)\Psi_0(\beta)^3
\sum_{n|q}n\tau(n)\mathscr{C}(0,0,0,n,0,v;q)\mathrm{d}\beta\nonumber\\
&\ll_{\varepsilon}&X^{1+\varepsilon}M^{-1}\sum_{q\leq Q}q^{-2+\varepsilon}
\sum_{n|q}n^{\frac{1}{2}}\tau(n)
\int\limits_{|\beta|\leq \frac{1}{q Q}}
\left(\frac{1}{x^{-1}+|\beta|}\right)^{\frac{3}{2}}\mathrm{d}\beta\nonumber\\
&\ll_{\varepsilon}&x^{\frac{3}{2}+\varepsilon}M^{-1}\sum_{q\leq Q}q^{-2+\varepsilon}
\sum_{n|q}n^{\frac{1}{2}}\tau(n)\nonumber\\
&\ll_{\varepsilon}&x^{\frac{3}{2}+\varepsilon}M^{-1}.
\eea

Next, we want to compute the contributions from
$\mathscr{B}_j^{\flat}(v,q,\beta)$ which constitute the main terms.
Interchanging
the order of summation over $a$ and the integration over $\beta$, we have
\bna
&&\sum_{q\leq Q}\int\limits_{|\beta|\leq \frac{1}{q Q}}\sum_{v\bmod q}
\varrho(v,q,\beta)\mathscr{B}_{14}^{\flat}(v,q,\beta)\mathrm{d}\beta\nonumber\\
&=&\frac{1}{2}\sum_{q\leq Q}\frac{1}{q^5}\int\limits_{|\beta|\leq \frac{1}{q Q}}
\sum_{v\bmod q}\varrho(v,q,\beta)\vartheta^{\flat,0}(\beta)\Psi_0(\beta)^3
\sum_{n|q}n\tau(n)P_2(n,q)\nonumber\\
&&\times\sideset{}{^*}\sum_{a=1}^q
e\left(\frac{-\overline{a}v}{q}\right)
G(a,0;q)^3 S\left(-\overline{a},0;\frac{q}{n}\right)\mathrm{d}\beta\nonumber\\
&=&\frac{1}{2}\sum_{q\leq Q}\frac{1}{q^5}\sum_{n|q}n\tau(n)P_2(n,q)
\sideset{}{^*}\sum_{a=1}^q
G(a,0;q)^3 S\left(-\overline{a},0;\frac{q}{n}\right)
\int\limits_{\mathscr{M}(a,q)}
\vartheta^{\flat,0}(\beta)\Psi_0(\beta)^3\mathrm{d}\beta.
\ena
Note that
\bna
\left[-\frac{1}{2qQ},\frac{1}{2qQ}\right]\subseteq\mathscr{M}(a,q)\subseteq
\left[-\frac{1}{q Q}, \frac{1}{q Q}\right].
\ena
As in \cite{Z}, we write $\mathscr{M}(a,q)$ as
\bna
\mathscr{M}(a,q)=\mathscr{M}(a,q)\backslash \left[-\frac{1}{2qQ},\frac{1}{2qQ}\right]
\bigcup \left[-\frac{1}{2qQ},\frac{1}{2qQ}\right].
\ena
Accordingly,
\bea
&&\sum_{q\leq Q}\int\limits_{|\beta|\leq \frac{1}{q Q}}\sum_{v\bmod q}
\varrho(v,q,\beta)\mathscr{B}_{14}^{\flat}(v,q,\beta)\mathrm{d}\beta\nonumber\\
&=&\frac{1}{2}\sum_{q\leq Q}\frac{1}{q^5}\sum_{n|q}n\tau(n)P_2(n,q)
\sideset{}{^*}\sum_{a=1}^q
G(a,0;q)^3 S\left(-\overline{a},0;\frac{q}{n}\right)
\int\limits_{-\infty}^{\infty}
\vartheta^{\flat,0}(\beta)\Psi_0(\beta)^3\mathrm{d}\beta\nonumber\\
&&+\mathscr{B}_{14}^*-\mathscr{B}_{14}^{**},
\eea
where
\bna
\mathscr{B}_{14}^*&=&\frac{1}{2}\sum_{q\leq Q}\frac{1}{q^5}\sum_{n|q}n\tau(n)P_2(n,q)
\sideset{}{^*}\sum_{a=1}^q
G(a,0;q)^3 S\left(-\overline{a},0;\frac{q}{n}\right)\nonumber\\
&&\times\int\limits_{\mathscr{M}(a,q)\backslash \left[-\frac{1}{2qQ},\frac{1}{2qQ}\right]}
\vartheta^{\flat,0}(\beta)\Psi_0(\beta)^3\mathrm{d}\beta\nonumber\\
\mathscr{B}_{14}^{**}&=&\frac{1}{2}\sum_{q\leq Q}\frac{1}{q^5}\sum_{n|q}n\tau(n)P_2(n,q)
\sideset{}{^*}\sum_{a=1}^q
G(a,0;q)^3 S\left(-\overline{a},0;\frac{q}{n}\right)
\int\limits_{|\beta|>\frac{1}{2qQ}}
\vartheta^{\flat,0}(\beta)\Psi_0(\beta)^3\mathrm{d}\beta.
\ena
Interchanging
the order of summation over $a$ and the integration over $\beta$ again, we have
\bea
\mathscr{B}_{14}^*
&=&\frac{1}{2}\sum_{q\leq Q}\frac{1}{q^5}\sum_{n|q}n\tau(n)P_2(n,q)
\int\limits_{\frac{1}{2qQ}\leq |\beta|\leq \frac{1}{qQ}}
\sum_{v\bmod q}
\varrho(v,q,\beta)
\vartheta^{\flat,0}(\beta)\Psi_0(\beta)^3
\nonumber\\
&&\sideset{}{^*}\sum_{a=1}^qe\left(\frac{-\overline{a}v}{q}\right)
G(a,0;q)^3 S\left(-\overline{a},0;\frac{q}{n}\right)
\mathrm{d}\beta\nonumber\\
&\ll_{\varepsilon}&\sum_{q\leq Q}q^{-2+\varepsilon}\sum_{n|q}n^{\frac{1}{2}}\tau(n)|P_2(n,q)|
\int\limits_{\frac{1}{2qQ}\leq |\beta|\leq \frac{1}{qQ}}|\beta|^{-\frac{5}{2}}\mathrm{d}\beta
\nonumber\\
&\ll_{\varepsilon}&Q^{\frac{5}{2}+\varepsilon},
\eea
and
\bea
\mathscr{B}_{14}^{**}
&=&\frac{1}{2}\sum_{q\leq Q}\frac{1}{q^5}\sum_{n|q}n\tau(n)P_2(n,q)
\int\limits_{|\beta|> \frac{1}{2qQ}}
\sum_{v\bmod q}
\varrho(v,q,\beta)
\vartheta^{\flat,0}(\beta)\Psi_0(\beta)^3
\nonumber\\
&&\sideset{}{^*}\sum_{a=1}^qe\left(\frac{-\overline{a}v}{q}\right)
G(a,0;q)^3 S\left(-\overline{a},0;\frac{q}{n}\right)
\mathrm{d}\beta\nonumber\\
&\ll_{\varepsilon}&\sum_{q\leq Q}q^{-2+\varepsilon}\sum_{n|q}n^{\frac{1}{2}}\tau(n)|P_2(n,q)|
\int\limits_{|\beta|>\frac{1}{2qQ}}|\beta|^{-\frac{5}{2}}\mathrm{d}\beta
\nonumber\\
&\ll_{\varepsilon}&Q^{\frac{5}{2}+\varepsilon}.
\eea
Here we have used (4.3), (5.2) and (6.1)-(6.3).
By (7.5)-(7.7), we obtain
\bea
&&\sum_{q\leq Q}\int\limits_{|\beta|\leq \frac{1}{q Q}}\sum_{v\bmod q}
\varrho(v,q,\beta)\mathscr{B}_{14}^{\flat}(v,q,\beta)\mathrm{d}\beta\nonumber\\
&=&\frac{1}{2}\sum_{q\leq Q}\frac{1}{q^5}\sum_{n|q}n\tau(n)P_2(n,q)
\sideset{}{^*}\sum_{a=1}^q
G(a,0;q)^3 S\left(-\overline{a},0;\frac{q}{n}\right)\nonumber\\
&&\times\int\limits_{-\infty}^{\infty}
\vartheta^{\flat,0}(\beta)\Psi_0(\beta)^3\mathrm{d}\beta
+O_{\varepsilon}\left(x^{\frac{5}{4}+\varepsilon}\right).
\eea
Moreover, by (4.6),
\bna
\Psi_0(\beta)=\int_0^{\sqrt{x}}e(\beta v^2)\mathrm{d}v=
x^{\frac{1}{2}}\int_0^{1}e(\beta x v^2)\mathrm{d}v.
\ena
It follows that
\bna
\int\limits_{-\infty}^{\infty}
\vartheta^{\flat,0}(\beta)\Psi_0(\beta)^3\mathrm{d}\beta
=x^{\frac{3}{2}}\int_{-\infty}^{\infty}
\left(\int_{X/2}^{X}e(-\beta u)\mathrm{d}u\right)
\left(\int_0^{1}e(\beta  x v^2)\mathrm{d}v\right)^3\mathrm{d}\beta
:=x^{\frac{3}{2}}\mathcal{I}_0(X),\nonumber\\
\ena
where
\bea
\mathcal{I}_0(X)=\int_{-\infty}^{\infty}
\left(\int_{X/2}^{X}e(-\beta u)\mathrm{d}u\right)
\left(\int_0^{1}e(\beta x v^2)\mathrm{d}v\right)^3\mathrm{d}\beta.
\eea
Substituting in (7.8) and by (7.2), we have
\bea
&&\sum_{q\leq Q}\int\limits_{|\beta|\leq \frac{1}{q Q}}\sum_{v\bmod q}
\varrho(v,q,\beta)\mathscr{B}_{14}(v,q,\beta)\mathrm{d}\beta\nonumber\\
&=&\frac{1}{2}\mathcal{I}_0(X)x^{\frac{3}{2}}\sum_{q\leq Q}\frac{1}{q^5}\sum_{n|q}n\tau(n)P_2(n,q)
\mathscr{C}(0,0,0,n,0,0;q)
+O_{\varepsilon}\left(x^{\frac{5}{4}+\varepsilon}+x^{\frac{3}{2}+\varepsilon}M^{-1}\right).\nonumber\\
\eea
Further, by (6.1) and (6.3), we have
\bea
&&\sum_{q>Q}\frac{1}{q^5}\sum_{n|q}n\tau(n)P_2(n,q)
\mathscr{C}(0,0,0,n,0,0;q)\nonumber\\
&\ll_{\varepsilon}&\sum_{q>Q}q^{-2+\varepsilon}\sum_{n|q}n^{\frac{1}{2}}\tau(n)|P_2(n,q)|\nonumber\\
&\ll_{\varepsilon}&Q^{-\frac{1}{2}+\varepsilon}.
\eea
By (7.10) and (7.11), we conclude that
\bea
\sum_{q\leq Q}\int\limits_{|\beta|\leq \frac{1}{q Q}}\sum_{v\bmod q}
\varrho(v,q,\beta)\mathscr{B}_{14}(v,q,\beta)\mathrm{d}\beta
=\frac{1}{2}\mathcal{I}_0(X)\mathcal {C}_2x^{\frac{3}{2}}
+O_{\varepsilon}\left(x^{\frac{5}{4}+\varepsilon}+x^{\frac{3}{2}+\varepsilon}M^{-1}\right),
\eea
where
\bea
\mathcal {C}_2=\sum_{q=1}^{\infty}\frac{1}{q^5}\sum_{n|q}n\tau(n)P_2(n,q)
\sideset{}{^*}\sum_{a=1}^q
G(a,0;q)^3 S\left(-\overline{a},0;\frac{q}{n}\right).
\eea

Similarly, we have
\bea
\sum_{q\leq Q}\int\limits_{|\beta|\leq \frac{1}{q Q}}\sum_{v\bmod q}
\varrho(v,q,\beta)\mathscr{B}_{15}^{\flat}(v,q,\beta)\mathrm{d}\beta
=\frac{1}{2}\mathcal{I}_1(X)\mathcal {C}_1x^{\frac{3}{2}}
+O_{\varepsilon}\left(x^{\frac{5}{4}+\varepsilon}\right),
\eea
where
\bea
\mathcal{I}_1(X)=\int_{-\infty}^{\infty}
\left(\int_{X/2}^Xe(-\beta u)(\log u)\mathrm{d}u\right)
\left(\int_0^{1}e(\beta  x v^2)\mathrm{d}v\right)^3\mathrm{d}\beta,
\eea
and
\bea
\mathcal {C}_1=\sum_{q=1}^{\infty}\frac{1}{q^5}\sum_{n|q}n\tau(n)P_1(n,q)
\sideset{}{^*}\sum_{a=1}^q
G(a,0;q)^3 S\left(-\overline{a},0;\frac{q}{n}\right).
\eea
By (7.3) and (7.14), we have
\bea
\sum_{q\leq Q}\int\limits_{|\beta|\leq \frac{1}{q Q}}\sum_{v\bmod q}
\varrho(v,q,\beta)\mathscr{B}_{15}(v,q,\beta)\mathrm{d}\beta
=\frac{1}{2}\mathcal{I}_1(X)\mathcal {C}_1x^{\frac{3}{2}}
+O_{\varepsilon}\left(x^{\frac{5}{4}+\varepsilon}+x^{\frac{3}{2}+\varepsilon}M^{-1}\right).
\eea

Finally, we have
\bea
\sum_{q\leq Q}\int\limits_{|\beta|\leq \frac{1}{q Q}}\sum_{v\bmod q}
\varrho(v,q,\beta)\mathscr{B}_{16}^{\flat}(v,q,\beta)\mathrm{d}\beta
=\frac{1}{4}\mathcal{I}_2(X)\mathcal {C}_0x^{\frac{3}{2}}
+O_{\varepsilon}\left(x^{\frac{5}{4}+\varepsilon}\right),
\eea
where
\bea
\mathcal{I}_2(X)=\int_{-\infty}^{\infty}
\left(\int_{X/2}^Xe(-\beta u)(\log u)^2\mathrm{d}u\right)
\left(\int_0^{1}e(\beta x v^2)\mathrm{d}v\right)^3\mathrm{d}\beta,
\eea
and
\bea
\mathcal {C}_0=\sum_{q=1}^{\infty}\frac{1}{q^5}\sum_{n|q}n\tau(n)
\sideset{}{^*}\sum_{a=1}^q
G(a,0;q)^3 S\left(-\overline{a},0;\frac{q}{n}\right).
\eea
By (7.4) and (7.18), we obtain
\bea
\sum_{q\leq Q}\int\limits_{|\beta|\leq \frac{1}{q Q}}\sum_{v\bmod q}
\varrho(v,q,\beta)\mathscr{B}_{16}(v,q,\beta)\mathrm{d}\beta
=\frac{1}{4}\mathcal{I}_2(X)\mathcal {C}_0X^{\frac{3}{2}}
+O_{\varepsilon}\left(x^{\frac{5}{4}+\varepsilon}+x^{\frac{3}{2}+\varepsilon}M^{-1}\right).
\eea

By (4.2), (4.11), (7.12), (7.17) and (7.21), the contribution from
$\mathscr{B}_j$, $14\leq j\leq 16$, is
\bna
\frac{1}{2}\mathcal{I}_0(X)\mathcal {C}_2x^{\frac{3}{2}}+\frac{1}{2}\mathcal{I}_1(X)\mathcal {C}_1x^{\frac{3}{2}}+
\frac{1}{4}\mathcal{I}_2(X)\mathcal {C}_0x^{\frac{3}{2}}+O_{\varepsilon}\left(x^{\frac{5}{4}+\varepsilon}+
x^{\frac{3}{2}+\varepsilon}M^{-1}\right),
\ena
where $\mathcal{I}_j$ ($0\leq j\leq 2$) and $\mathcal {C}_j$ ($0\leq j\leq 2$)
are defined in (7.9), (7.15), (7.19) and (7.13), (7.16), (7.20),
respectively.

\medskip

\section{Proof of Proposition 5.2}
\setcounter{equation}{0}
\medskip

Recall $\Phi_{\beta}^{\pm}(y)$ in (4.9) which we relabel as
\bea
\Phi_{\beta}^{\pm}(y)=\Phi_0(y,\beta)\pm\frac{1}{i\pi^3y}\Phi_1(y,\beta),
\eea
where for $\sigma>-1-k$,
\bea
\Phi_k(y,\beta)=(\pi^3y)^k\frac{1}{2\pi i}\int\limits_{\mathrm{Re}(s)=\sigma}
(\pi^3 y)^{-s}
\frac{\Gamma\left(\frac{1+s+k}{2}\right)^3}
{\Gamma\left(\frac{-s+k}{2}\right)^3}
\widetilde{\phi_{\beta}}(-s)\mathrm{d}s
\eea
with $\phi_{\beta}(y)=\phi\left(\frac{y}{X}\right)e(-\beta y)$.
Note that
\bna
\phi_{\beta}^{(j)}(y)\ll_j \left(\frac{M+|\beta|X}{X}\right)^j.
\ena
By Lemma 3.2, we have
\bea
&&\sum_{\pm}\sum_{m\geq 1}\frac{1}{m}\sum_{n_1|n}\sum_{n_2|\frac{n}{n_1}}
\sigma_{0,0}\left(\frac{n}{n_1n_2},m\right)
\left|\Phi_{\beta}^{\pm}\left(\frac{mn^2}{q^3}\right)\right|\nonumber\\
&&=
\sum_{\pm}\sum_{\frac{mn^2}{q^3}X< X^{\varepsilon}(M+|\beta|X)^3}\frac{1}{m}\sum_{n_1|n}\sum_{n_2|\frac{n}{n_1}}
\sigma_{0,0}\left(\frac{n}{n_1n_2},m\right)
\left|\Phi_{\beta}^{\pm}\left(\frac{mn^2}{q^3}\right)\right|+O_{\varepsilon}(1).
\eea
Moreover, by (3.1), trivially, we have
\bea
\sum_{m\leq L}\sum_{n_1|n}\sum_{n_2|\frac{n}{n_1}}
\sigma_{0,0}\left(\frac{n}{n_1n_2},m\right)\ll
\sum_{m\leq L}\tau_3(n)\tau_3(m)\ll_{\varepsilon} n^{\varepsilon}L^{1+\varepsilon}.
\eea

For $yX\ll X^{\varepsilon}$,
we move the line of integration in (8.2) to $\sigma=-1+\varepsilon$ to obtain
\bea
\Phi_k(y,\beta)&=&(\pi^3y)^k\frac{1}{2\pi i}\int\limits_{\mathrm{Re}(s)=-1+\varepsilon}
(\pi^3 y)^{-s}
\frac{\Gamma\left(\frac{1+s+k}{2}\right)^3}
{\Gamma\left(\frac{-s+k}{2}\right)^3}
\widetilde{\phi_{\beta}}(-s)\mathrm{d}s\nonumber\\
&\ll&y^k(y X)^{1-\varepsilon}\int_{-\infty}^{\infty}(1+|t|)^{-\frac{3}{2}+3\varepsilon}\mathrm{d}t
\nonumber\\
&\ll_{\varepsilon}&y^kX^{\varepsilon}.
\eea
Here we have used Stirling's formula and the estimate
$\widetilde{\phi_{\beta}}(-s)=\int_0^{\infty}\phi_{\beta}(u)u^{-s-1}\mathrm{d}u\ll X^{-\sigma}$.
By (8.1), (8.4) and (8.5), we have
\bea
&&\sum_{\pm}\sum_{\frac{mn^2}{q^3}X\ll X^{\varepsilon}}\frac{1}{m}\sum_{n_1|n}\sum_{n_2|\frac{n}{n_1}}
\sigma_{0,0}\left(\frac{n}{n_1n_2},m\right)
\left|\Phi_{\beta}^{\pm}\left(\frac{mn^2}{q^3}\right)\right|\nonumber\\
&\ll_{\varepsilon}&X^{\varepsilon}\max_{1\leq L\ll \frac{q^3X^{\varepsilon}}{n^2X}}\frac{1}{L}
\sum_{L<m\leq 2L}\sum_{n_1|n}\sum_{n_2|\frac{n}{n_1}}
\sigma_{0,0}\left(\frac{n}{n_1n_2},m\right)\nonumber\\
&\ll_{\varepsilon}&X^{\varepsilon}
\max_{1\leq L\ll \frac{q^3X^{\varepsilon}}{n^2X}}\frac{1}{L} n^{\varepsilon}L^{1+\varepsilon}\nonumber\\
&\ll_{\varepsilon}&X^{\varepsilon}n^{\varepsilon}.
\eea

For $yX>X^{\varepsilon}$, by Lemma 3.3, we have
\bna
\Phi_k(y,\beta)&=&(\pi^3 y)^{k+1}\sum_{j=1}^{\ell} \int_0^{\infty}\phi\left(\frac{u}{X}\right)e(-\beta u)
\left(a_k(j)e\left(3(y u)^{\frac{1}{3}}\right)+b_k(j)e\left(-3(y u)^{\frac{1}{3}}\right)\right)\\
&&\times\frac{\mathrm{d}u}{(\pi^3yu)^{\frac{j}{3}}}
+O_{\varepsilon,\ell}\left((\pi^3y)^k(\pi^3yX)^{-\frac{\ell}{3}+\frac{1}{2}+\varepsilon}\right),
\ena
where $a_k(j)$ and $b_k(j)$ are constants. Then by (8.1),
\bea
\Phi_{\beta}^{\pm}(y)\ll_{\varepsilon,\ell}y\sum_{j=1}^{\ell} y^{-\frac{j}{3}}\left(|\mathscr{I}_j(y,\beta)|
+|\mathscr{J}_j(y,\beta)|\right)+(y X)^{-\frac{\ell}{3}+\frac{1}{2}+\varepsilon},
\eea
with
\bna
\mathscr{I}_j(y,\beta)&=&\int_0^{\infty}u^{-\frac{j}{3}}\phi\left(\frac{u}{X}\right)e(-\beta u)
e\left(3(y u)^{\frac{1}{3}}\right)\mathrm{d}u,\\
\mathscr{J}_j(y,\beta)&=&\int_0^{\infty}u^{-\frac{j}{3}}\phi\left(\frac{u}{X}\right)e(-\beta u)
e\left(-3(y u)^{\frac{1}{3}}\right)\mathrm{d}u.
\ena

By partial integration twice, we have
\bea
\mathscr{I}_j(y,\beta), \mathscr{J}_j(y,\beta)
\ll (y X)^{-\frac{2}{3}}X^{1-\frac{j}{3}}\left(M+|\beta|^2X^2\right).
\eea
Taking $\ell=3$. By (8.7) and (8.8), we have
\bea
\Phi_{\beta}^{\pm}(y)\ll_{\varepsilon}(y X)^{\frac{1}{3}}(M+|\beta|^2X^2)
\sum_{j=1}^3 (y X)^{-\frac{j}{3}}+(y X)^{-\frac{1}{2}+\varepsilon}
\ll_{\varepsilon} M+|\beta|^2X^2.
\eea
By (8.4) and (8.9), we have
\bea
&&\sum_{\pm}\sum_{X^{\varepsilon}<\frac{n^2m}{q^3}X< X^{\varepsilon}(M+|\beta|X)^3}\frac{1}{m}\sum_{n_1|n}\sum_{n_2|\frac{n}{n_1}}
\sigma_{0,0}\left(\frac{n}{n_1n_2},m\right)
\left|\Phi_{\beta}^{\pm}\left(\frac{mn^2}{q^3}\right)\right|\nonumber\\
&\ll_{\varepsilon}&(M+|\beta|^2X^2) (\log X)
\max_{\frac{q^3X^{\varepsilon}}{n^2X}<L< \frac{q^3X^{\varepsilon}(M+|\beta|X)^3}{n^2X}}\frac{1}{L}
\sum_{L<m\leq 2L}\sum_{n_1|n}\sum_{n_2|\frac{n}{n_1}}
\sigma_{0,0}\left(\frac{n}{n_1n_2},m\right)\nonumber\\
&\ll_{\varepsilon}&X^{\varepsilon}n^{\varepsilon}(M+|\beta|^2X^2).
\eea
Then Proposition 5.2 follows from (8.3), (8.6) and (8.10).

\medskip

\section{Estimation of the character sum $\mathscr{C}(b_1,b_2,b_3,n,m,v;q)$}
\setcounter{equation}{0}
\medskip

In this section, we shall prove Proposition 5.1. Let $b_1,b_2,b_3,n, m, v\in \mathbb{Z}$ and $n|q$.
We recall $\mathscr{C}(b_1,b_2,b_3,n,m,v;q)$ in (4.28) which we relabel as
\bea
\mathscr{C}(b_1,b_2,b_3,n,m,v;q)=\sideset{}{^*}\sum_{a\bmod q}e\left(\frac{-\overline{a}v}{q}\right)
G(a,b_1;q)G(a,b_2;q)G(a,b_3;q) S\left(-\overline{a},m;\frac{q}{n}\right),
\eea
where $G(a,b;q)$ is the Gauss sum defined in (4.5).
Here for notation simplicity, we have replaced $\pm m$ in (4.28) by $m$, which does not affect our argument.

We first list some well-known results for $G(a,b;q)$
(see for example Lemma 5.4.5 in \cite{Huxley}).
For $(a,q)=1$, we have
\bea
G(a,b;q)\ll \sqrt{q}.
\eea
For $(2a,q)=1$, we have
\bea
G(a,b;q)=
e\left(-\frac{\bar{4}\bar{a}b^2}{q}\right)G(a,0;q)
\eea
and
\bea
G(a,0;q)=\left(\frac{a}{q}\right)\epsilon_q\sqrt{q},
\eea
where
$
\epsilon_q=\left\{\begin{array}{ll}
1,&\mbox{if $q\equiv 1(\bmod 4)$},\\
i,&\mbox{if $q\equiv -1(\bmod 4)$.}
\end{array}
\right.
$

As in \cite{Sun}, we factor $q$ as $q=q_1q_2q_3$, where $q_1$ is the largest factor of $q$ such that $q_1|n$, $(q_1,q_2q_3)=1$,
$q_2|n^{\infty}$, $(q_2,q_3)=1$. Then $q_2$ is square-full and $n|q_1q_2$. Denote
temporarily $q'=q_1q_2$ and $\widehat{q}=\frac{q'}{n}$.
Then we have
\bea
&&\mathscr{C}(b_1,b_2,b_3,n,m,v;q)\nonumber\\
&=&\sideset{}{^*}\sum_{a_1\bmod q'}
e\left(\frac{-\overline{a_1}v}{q'}\right)
G(a_1,b_1;q')G(a_1,b_2;q')G(a_1,b_3;q')
S(-\overline{a_1}q_3,m\overline{q_3}^2;\widehat{q})\nonumber\\
&&\times \sideset{}{^*}\sum_{a_2\bmod q_3}
e\left(\frac{-\overline{a}_2v}{q_3}\right)
G(a_2,b_1;q_3)G(a_2,b_2;q_3)G(a_2,b_3;q_3)
S(-\overline{a_2}q',m\overline{\widehat{q}}^2;q_3)
\nonumber\\
&:=&\mathscr{C}^*(b_1,b_2,b_3,n,m,v;q')\mathscr{C}^{**}(b_1,b_2,b_3,n,m,v;q_3)
\eea
say.

By (9.2) and Weil's bound for Kloosterman sum we have
\bea
\mathscr{C}^*(b_1,b_2,b_3,n,m,v;q')\ll q'^{\frac{5}{2}}
\left(\overline{a_1}q_3,m\overline{q_3}^2,\frac{q'}{n}\right)^{\frac{1}{2}}
\left(\frac{q'}{n}\right)^{\frac{1}{2}}\tau\left(\frac{q'}{n}\right)
\ll \frac{q_1^3q_2^3\tau(q_1q_2)}{\sqrt{n}}.
\eea

To estimate $\mathscr{C}^{**}(b_1,b_2,b_3,n,m,v;q_3)$, we further factor
$q_3$ as $q_3=q_3'q_3''$ with $(q_3',2q_3'')=1$, $q_3'$ square-free and $4q_3''$ square-full.
Then
\bea
\mathscr{C}^{**}(b_1,b_2,b_3,n,m,v;q_3)=\mathscr{C}_1^{**}(b_1,b_2,b_3,n,m,v;q_3')
\mathscr{C}_2^{**}(b_1,b_2,b_3,n,m,v;q_3''),
\eea
where
\bna
\mathscr{C}_1^{**}(b_1,b_2,b_3,n,m,v;q_3')&=&\sideset{}{^*}\sum_{\gamma \bmod q_3'}
e\left(\frac{-\overline{\gamma}v}{q_3'}\right)
G(\gamma,b_1;q_3')G(\gamma,b_2;q_3') G(\gamma,b_3;q_3')\\
&&\times S(-\overline{\gamma}q_3''q',m\overline{\widehat{q}}^2\overline{q_3''}^2;q_3'),
\ena
and
\bna
\mathscr{C}_2^{**}(b_1,b_2,b_3,n,m,v;q_3'')&=&\sideset{}{^*}\sum_{\gamma \bmod q_3''}
e\left(\frac{-\overline{\gamma}v}{q_3''}\right)
G(\gamma,b_1;q_3'')G(\gamma,b_2;q_3'')G(\gamma,b_3;q_3'')\\
&&\times S(-\overline{\gamma}q_3'q',m\overline{\widehat{q}}^2\overline{q_3'}^2;q_3'').
\ena
We estimate $\mathscr{C}_2^{**}(b_1,b_2,b_3,n,m,v;q_3'')$ similarly as
$\mathscr{C}^{*}(b_1,b_2,b_3,n,m,v;q')$ getting
\bea
\mathscr{C}_2^{**}(b_1,b_2,b_3,n,m,v;q_3'')\ll q''^{3}\tau(q_3'').
\eea

To estimate $\mathscr{C}_1^{**}:=\mathscr{C}_1^{**}(b_1,b_2,b_3,n,m,v;q_3')$,
we factor $q_3'$ as $q_3'=p_1p_2\cdot\cdot\cdot p_s$, $p_i$ prime, and correspondingly,
\bea
\mathscr{C}_1^{**}=\prod_{i=1}^s\mathscr{T}(b_1,b_2,b_3,
q'q_3''p_i',m\overline{\widehat{q}}^2\overline{q_3''}^2\overline{p_i'}^2;p_i),
\eea
where $p_i'=q_3'/p_i$ and
\bna
\mathscr{T}(b_1,b_2,b_3,r_1,r_2m;p)
=\sideset{}{^*}\sum_{z\bmod p}e\left(\frac{-v\overline{z}}{p}\right)
G(z,b_1;p)G(z,b_2;p)G(z,b_3;p)S(-r_1\overline{z},r_2m;p)
\ena
with $(p,2r_1r_2)=1$.

By (9.3) and (9.4), we write
\bna
\mathscr{T}(b_1,b_2,b_3,r_1,r_2m;p)
=\epsilon_p^3p^{\frac{3}{2}}\sideset{}{^*}\sum_{z\bmod p}\left(\frac{z}{p}\right)
e\left(\frac{-\overline{4}(4v+b_1^2+b_2^2+b_3^2)\overline{z}}{p}\right)
S(-r_1\overline{z},r_2m;p).\\
\ena

By (9.5)-(9.9), Proposition 5.1 follows from the following lemma.

\begin{lemma}
We have
\bna
\mathscr{T}(b_1,b_2,b_3,r_1,r_2m;p)\ll p^{\frac{5}{2}}.
\ena
\end{lemma}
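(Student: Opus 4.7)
The approach is to open the Kloosterman sum $S(-r_1\overline{z},r_2m;p)$, interchange the order of summation so that the inner $z$-sum becomes, after substitution, a classical Gauss sum, and then bound the remaining one-variable sum over the Kloosterman variable by Weil's theorem. Since (9.9) already gives $\mathscr{T} = \epsilon_p^3 p^{3/2}\, \mathscr{S}$ with
$$\mathscr{S} = \sideset{}{^*}\sum_{z \bmod p} \left(\frac{z}{p}\right) e\!\left(\frac{A\overline{z}}{p}\right) S(-r_1\overline{z},r_2m;p), \qquad A=-\overline{4}(4v+b_1^2+b_2^2+b_3^2),$$
it suffices to prove $\mathscr{S} \ll p$.

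First I would write $S(-r_1\overline{z},r_2m;p) = \sideset{}{^*}\sum_{w \bmod p} e((-r_1\overline{z}w + r_2m\overline{w})/p)$ and pull the $w$-sum outside, obtaining
$$\mathscr{S} = \sideset{}{^*}\sum_{w \bmod p} e\!\left(\frac{r_2m\overline{w}}{p}\right)\sideset{}{^*}\sum_{z \bmod p} \left(\frac{z}{p}\right) e\!\left(\frac{(A-r_1w)\overline{z}}{p}\right).$$
The change of variable $u=\overline{z}$ turns the inner sum into a standard Gauss sum $\sum_u^\ast (\tfrac{u}{p}) e((A-r_1w)u/p)$, which equals $(\tfrac{A-r_1w}{p})\epsilon_p\sqrt{p}$ when $A-r_1w\not\equiv 0\pmod p$ and vanishes otherwise (by orthogonality of the Legendre symbol). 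This reduces the problem to bounding
$$T = \sum_{w\in\mathbb{F}_p^{*}}\left(\frac{A-r_1w}{p}\right) e\!\left(\frac{r_2m\overline{w}}{p}\right) \ll \sqrt{p}.$$
To treat $T$, I would substitute $w\mapsto\overline{w}$ and use $(\tfrac{\overline{w}}{p})=(\tfrac{w}{p})$ to rewrite it (up to a constant factor) as $\sum_w (\tfrac{w(Aw-r_1)}{p})\, e(r_2mw/p)$, a classical Weil sum with a quadratic polynomial inside the Legendre symbol twisted by an additive character. When the quadratic $w(Aw-r_1)$ is squarefree in $\mathbb{F}_p[w]$ and $r_2m\not\equiv 0 \pmod p$, Weil's theorem yields $|T|\ll \sqrt{p}$, and combining with the $\epsilon_p\sqrt{p}$ from the Gauss sum and the prefactor $\epsilon_p^3 p^{3/2}$ gives $\mathscr{T} \ll p^{3/2}\cdot\sqrt{p}\cdot\sqrt{p}=p^{5/2}$, as required.

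The main obstacle is the uniform handling of the degenerate cases, which however each collapse to simpler sums that still obey $O(\sqrt{p})$: if $A\equiv 0\pmod p$ then $T$ itself is a pure Gauss sum in $w$; if $r_2m\equiv 0\pmod p$ then the linear substitution $u=A-r_1w$ leaves the incomplete character sum $\sum_u (\tfrac{u}{p}) = O(1)$; if $r_1\equiv 0\pmod p$ then the Legendre factor decouples and $T$ is again a Gauss sum. As the author's remark suggests, an alternative route would be to retain the full two-dimensional sum $\sum_{z,w}^\ast \chi(z)\, \psi((A\overline{z}-r_1\overline{z}w+r_2m\overline{w})/p)$ and invoke Fu's theorem directly to extract square-root cancellation in both variables simultaneously; this should also yield the stated $p^{5/2}$ bound with the same leading constant.
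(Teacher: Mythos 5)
Your argument is correct, and in the crucial non-degenerate case it takes a genuinely different route from the paper. The paper opens the Kloosterman sum but keeps the resulting \emph{two}-variable sum
$\sum_{y,z\in\mathbb{F}_p^{\times}}\left(\frac{z}{p}\right)e\left(\frac{r_0\overline{z}-r_1\overline{z}y+r_2m\overline{y}}{p}\right)$
with $r_0=-\overline{4}(4v+b_1^2+b_2^2+b_3^2)$, verifies that $f(y,z)=r_0z^{-1}-r_1z^{-1}y+r_2my^{-1}$ is non-degenerate with respect to its Newton polyhedron, and invokes Fu's theorem (Corollary 0.3 of \cite{Fu}) to get the bound $\ll p$. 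You instead execute the $z$-summation in closed form: after the interchange the inner sum is the quadratic Gauss sum $\sum_{z\in\mathbb{F}_p^{\times}}\left(\frac{z}{p}\right)e\left(\frac{(A-r_1w)\overline{z}}{p}\right)=\left(\frac{A-r_1w}{p}\right)\epsilon_p\sqrt{p}$ (vanishing when $p\mid A-r_1w$), which reduces everything to the one-variable mixed sum $\sum_{w}\left(\frac{w(Aw-r_1)}{p}\right)e\left(\frac{r_2mw}{p}\right)$; since $p\nmid r_1$, the quadratic $w(Aw-r_1)$ has two distinct roots whenever $p\nmid A$, so the classical Weil bound for $\sum_w\chi(f(w))\psi(g(w))$ gives $O(\sqrt{p})$ and hence $\mathscr{T}\ll p^{3/2}\cdot\sqrt{p}\cdot\sqrt{p}=p^{5/2}$. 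Your degenerate cases line up exactly with the paper's case split: for $p\mid m$ the paper uses $S(-r_1\overline{z},0;p)=-1$ while you use $\sum_{u\neq A}\left(\frac{u}{p}\right)=-\left(\frac{A}{p}\right)$; for $p\mid 4v+b_1^2+b_2^2+b_3^2$ both arguments collapse to products of Gauss sums; and the case $p\mid r_1$ that you mention is excluded by the hypothesis $(p,2r_1r_2)=1$, so it need not be treated. The trade-off is that your route is elementary and self-contained (nothing beyond Weil for curves), showing that for this particular sum the two-dimensional square-root cancellation emphasized in the paper's remarks can be obtained by hand, whereas the appeal to Fu's theorem is the more robust mechanism and would be needed if the $z$-variable could not be summed in closed form.
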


\begin{proof}
If $p|m$, then $S(-r_1\overline{z},r_2m;p)=-1$ and trivially,
\bna
\mathscr{T}(b_1,b_2,b_3,r_1,r_2m;p)=-\epsilon_p^3p^{\frac{3}{2}}\sideset{}{^*}
\sum_{z\bmod p}\left(\frac{z}{p}\right)
e\left(\frac{-\overline{4}(4v+b_1^2+b_2^2+b_3^2)\overline{z}}{p}\right)
\ll p^{\frac{5}{2}}.
\ena

If $p\nmid m$, we open the Kloosterman sum to obtain
\bna
\mathscr{T}(b_1,b_2,b_3,r_1,r_2m;p)
=\epsilon_p^3p^{\frac{3}{2}}\sum_{y,z\in \mathbb{F}_p^{\times}}\left(\frac{z}{p}\right)
e\left(\frac{-\overline{4}(4v+b_1^2+b_2^2+b_3^2)\overline{z}-r_1y\overline{z}
+r_2m\overline{y}}{p}\right).
\ena
If $p\nmid m$, $p|4v+b_1^2+b_2^2+b_3^2$, changing variable $y\overline{z}\rightarrow z$, we have
\bna
\mathscr{T}(b_1,b_2,b_3,r_1,r_2m;p)&=&\epsilon_p^3p^{\frac{3}{2}}\sum_{y\in \mathbb{F}_p^{\times}}\left(\frac{y}{p}\right)
e\left(\frac{r_2m\overline{y}}{p}\right)\sum_{z\in \mathbb{F}_p^{\times}}
\left(\frac{\overline{z}}{p}\right)
e\left(\frac{-r_1z}{p}\right)\nonumber\\
&=&\epsilon_p^3p^{\frac{3}{2}}\left(\frac{\overline{r_2m}}{p}\right)
\left(\frac{-\overline{r_1}}{p}\right)\tau\left(\left(\frac{\cdot}{p}\right)\right)^2,
\ena
where $\tau\left(\left(\frac{\cdot}{p}\right)\right)$ is the Gauss sum associated with
the quadratic residue $\left(\frac{\cdot}{p}\right)$, i.e.
\bna
\tau\left(\left(\frac{\cdot}{p}\right)\right)=\sum_{\gamma\bmod p}
\left(\frac{\gamma}{p}\right)e\left(\frac{\gamma}{p}\right)=\epsilon_p\sqrt{p}.
\ena
Therefore,
\bna
\mathscr{T}(b_1,b_2,b_3,r_1,r_2m;p)\ll p^{\frac{5}{2}}.
\ena

If $p\nmid m$, $p\nmid 4v+b_1^2+b_2^2+b_3^2$, we denote $r_0=-\overline{4}(4v+b_1^2+b_2^2+b_3^2)$
and let $f(y,z)=r_0z^{-1}-r_1z^{-1}y+r_2my^{-1}\in
\mathbb{F}_{p}^{\times}[y,z,(yz)^{-1}]$.
The Newton polyhedron $\Delta(f)$ of $f$ is the triangle in $\mathbb{R}^2$ with vertices
$(-1,0)$, $(-1,1)$ and $(0,-1)$. Thus $\mathrm{dim}\Delta(f)=2$. Moreover, for each of the following six
polynomials
\bna
f_{\sigma}(y,z)=r_0z^{-1}, -r_1z^{-1}y,
r_2my^{-1}, r_0z^{-1}-r_1z^{-1}y,r_0z^{-1}+r_2my^{-1},
-r_1z^{-1}y+r_2my^{-1}
\ena
corresponding to the faces of $\Delta(f)$ not containing $(0,0)$,
the locus of
\bna
\frac{\partial f_{\sigma}}{\partial y}=\frac{\partial f_{\sigma}}{\partial z}=0
\ena
is empty in $\left(\overline{F_p}^{\times}\right)^2$. In other words $f$ is non-degenerate
with respect to $\Delta(f)$. By Corollary 0.3 in Fu \cite{Fu}, we have
\bna
\sum_{y,z\in \mathbb{F}_p^{\times}}\left(\frac{z}{p}\right)
e\left(\frac{r_0\overline{z}-r_1\overline{z}y+r_2m\overline{y}}{p}\right)\ll p.
\ena
This completes the proof of Lemma 9.1.

\end{proof}

\medskip
\noindent
{\sc Acknowledgements.} The authors would like to thank Professor Lei Fu for valuable
discussion on the twisted character sums.
The first author is supported by
the National Natural Science Foundation of China (Grant No. 11101239) and the second author
is supported by the Natural Science Foundation of Shandong Province (Grant No. ZR2015AM010).

\bigskip

{\small \textsc{Qingfeng Sun},
\textsc{School of Mathematics and Statistics, Shandong University, Weihai,
Weihai, Shandong 264209, China}\\
\indent{\it E-mail address}: qfsun@sdu.edu.cn

\medskip

\textsc{Deyu Zhang},
\textsc{School of Mathematical Sciences, Shandong Normal University, Jinan,
Shandong 250014, China}\\
\indent{\it E-mail address}: zdy\_78@hotmail.com}


\begin{thebibliography}{100}


\bibitem{CV}{}
C. Calder\'{o}n and M. J. de Velasco, {\it On divisors of a quadratic form},
Bol. Soc. Brasil. Mat. 31 (2000), 81-91.


\bibitem{Fu}{}
L. Fu, {\it Weights of twisted exponential sums},
Math. Z. 262 (2009), 449-472.

\bibitem{FI}{}
J. B. Friedlander and H. Iwaniec, {\it A polynomial divisor problem},
J. reine angew. Math. 601 (2006), 109-137.


\bibitem{GZ}{}
R. Guo and W. G. Zhai, {\it Some problems about the ternary quadratic form $m_1^2+m_2^2+m_3^2$},
Acta Arith. 156 (2012), 101-121.


\bibitem{HB}{}
D. R. Heath-Brown,
{\it Cubic forms in ten variables},
Proc. London Math. Soc. 3 (2) (1983), 225-257.

\bibitem{Hooley}{}
C. Hooley, {\it On the number of divisors of quadratic polynomials},
Acta Math. 110 (1963): 97-114.


\bibitem{Huxley}{}
M. N. Huxley, {\it Area, lattice points, and exponential sums}, Oxford University Press, 1996.


\bibitem{Iv}{Iv}
 A. Ivi\'{c},
{\it On the ternary additive divisor problem and the six moment of the zeta-function},
Sieve methods, exponential sums, and their applications in number theory
(Cardiff, 1995), 205-243, London Math. Soc. Lecture Nore Ser., 237,
Cambridge Univ. Press, Cambridge, 1997.

\bibitem{Iw}{Iw}
H. Iwaniec, {\it Topics in classical automorphic forms}, American Mathematical Soc., 1997.

\bibitem{Li1}{}
X. Li,
{\it The central value of the Rankin-Selberg $L$-functions},
GAFA 18 (2009), 1660-1695.


\bibitem{Li3}{}
X. Li,
{\it The Voronoi formula for the triple divisor function},
Automorphic Forms and $L$-functions, ALM 30, 69-90.



\bibitem{Sun}{}
Q. F. Sun, {\it Shifted convolution sums of $GL_3$ cusp forms with $\theta$-series},
arXiv:1509.07644.

\bibitem{V}{}
R. C. Vaughan, {\it The Hardy-Littlewood method}, Cambridge University Press, 1997.

\bibitem{RY}{}
X. M. Ren and Y. B. Ye, {\it Asymptotic Voronoi's summation formulas and their duality for
$SL_3(\mathbb{Z})$}, Number theory¡ªarithmetic in Shangri-La, 213¨C236,
Ser. Number Theory Appl., 8, World Sci. Publ., Hackensack, NJ, 2013.


\bibitem{Yu}{}
G. Yu, {\it On the number of divisors of the quadratic form $m^2+n^2$},
Canad. Math. Bull. 43 (2) (2000), 239-256.




\bibitem{Z}{}
L. L. Zhao, {\it The sum of divisors of a quadratic form},  Acta Arith. 163 (2014), no. 2, 161¨C177.


\end{thebibliography}
\end{document}